\def\H{\widetilde{H}}
\def\R{{\mathbb R}}
\def\EE{{\mathcal E}}
\def\KK{{\mathcal K}}
\def\MM{{\mathcal M}}
\def\PP{{\mathcal P}}
\def\SS{{\mathcal S}}
\def\SSS{{\mathbb S}}
\def\TT{{\mathcal T}}
\def\NN{\mathcal{N}}
\def\XX{{\mathcal X}}
\def\YY{{\mathcal Y}}
\def\II{{\mathcal I}}
\def\L{\mathbf{L}}
\def\H{\mathbf{H}}
\newcommand{\norm}[3][]{#1\|#2#1\|_{#3}}
\def\set#1#2{\big\{#1\,:\,#2\big\}}
\def\eps{\varepsilon}
\newcounter{const}
\def\setc#1{\refstepcounter{const}\label{const:#1}C_{\theconst}}
\def\c#1{C_{\ref{const:#1}}}
\newcounter{contractionnumber}
\def\nameq#1#2{%
  \ifthenelse{\equal{#1}{reduction}}{q_{\rm red}}{%
  \ifthenelse{\equal{#1}{estconv}}{q_{\rm est}}{%
  \ifthenelse{\equal{#1}{cea}}{q_{\mbox{\scriptsize C\'ea}}}{%
  \ifthenelse{\equal{#2}{newcounter}}{\refstepcounter{contractionnumber}\label{contraction#1}}{}q_{\ref{contraction#1}}}%
}}}
\def\FFF{\mathbf{F}}
\def\GGG{\mathbf{G}}
\def\www{\mathbf{w}}
\def\mmm{\mathbf{m}}
\def\EEE{\mathbf{E}}
\def\HHH{\mathbf{H}}
\def\JJJ{\mathbf{J}}
\def\heff{\mathbf{H}_{\text{eff}}}
\def\subsub{\Subset}
\def\nnn{\mathbf{n}}
\def\zzz{\mathbf{z}}
\def\xxx{\mathbf{x}}
\def\vvv{\mathbf{v}}
\def\uuu{\mathbf{u}}
\def\pphi{\pmb{\phi}}
\def\PPHI{\pmb{\Phi}}
\def\ppsi{\pmb{\psi}}
\def\PPSI{\pmb{\Psi}}
\def\vphi{\pmb{\varphi}}
\def\zzeta{\pmb{\zeta}}
\def\TTHETA{\pmb{\Theta}}
\def\curl{\text{\textbf{curl}}}
\def\wgamma{\gamma_{hk}}
\DeclareMathOperator{\diver}{div}
\newtheorem{theorem}{Theorem}
\newtheorem{lemma}[theorem]{Lemma}
\newtheorem{algorithm}[theorem]{Algorithm}
\newtheorem{definition}[theorem]{Definition}
\newenvironment{remark}{\medskip\noindent\textbf{Remark.}\ \it}{\qed\smallskip}
\def\subsection#1
\bf\arabic{section}.\arabic{subsection}.~#1.~}
\begin{document}

\title[Convergent scheme for Maxwell-LLG]%
{A convergent linear finite element scheme for the Maxwell-Landau-Lifshitz-Gilbert equation}
\date{\today}

\author{L'.~Ba\v nas}
\author{M.~Page}
\author{D.~Praetorius}
\address{Institute for Analysis and Scientific Computing,
       Vienna University of Technology,
       Wiedner Hauptstra\ss{}e 8-10,
       A-1040 Wien, Austria}
\email{L.Banas@hw.ac.uk}
\email{Dirk.Praetorius@tuwien.ac.at}
\email{Marcus.Page@tuwien.ac.at\quad\rm(corresponding author)}

\keywords{Maxwell-LLG, linear scheme, ferromagnetism, convergence}
\subjclass[2000]{65N30, 65N50}

\begin{abstract}
We consider a lowest-order finite element discretization of
the nonlinear system of Maxwell's and Landau-Lifshitz-Gilbert equations (MLLG). Two algorithms are proposed to numerically solve this problem, both of which only require the solution of at most two linear systems per timestep. One of the algorithms is fully decoupled in the sense that each timestep consists of the sequential computation of the magnetization and \emph{afterwards} the magnetic and electric field. Under some mild assumptions on the effective field, we show that both algorithms converge towards weak solutions of the MLLG system.
Numerical experiments for a micromagnetic benchmark problem demonstrate the performance
of the proposed algorithms.
\end{abstract}


\maketitle

\section{Introduction}\label{sec:0}

\noindent
The understanding of magnetization dynamics, especially on a microscale, is of utter relevance, for example in the development of magnetic sensors, recording heads, and magneto-resistive storage devices. In the literature, a well accepted model for micromagnetic phenomena, is the Landau-Lifshitz-Gilbert equation (LLG), see~\eqref{eq:mllg1}. This nonlinear partial differential equation describes the behaviour of the magnetization of some ferromagnetic body under the influence of a so-called effective field. Existence (and non-uniqueness) of weak solutions of LLG goes back to~\cite{as, visintin}. Existence of weak solutions for MLLG was first shown in~\cite{carbou}. For a complete review of the analysis for LLG, we refer to~\cite{cimrak, gc, mp06} or the monographs~\cite{hubertschaefer, prohl} and the references therein. As far as numerical simulation is concerned, convergent integrators can be found e.g.\ in the works~\cite{bp, bjp} or~\cite{banas}, where the latter considers a weak integrator for 
the coupled MLLG system. From the viewpoint of numerical analysis, the integrator from~\cite{bjp} suffers from explicit time stepping, since this imposes a strong coupling of the timestep-size $k$ and the spatial mesh-size $h$. The integrators of~\cite{bp, banas}, on the other hand, rely on the implicit midpoint rule for time discretization, and unconditional convergence is proved. In practice though, a nonlinear system of equations has to be solved in each timestep, and to that end, a fixed-point iteration is proposed in the works~\cite{bp,banas}. This, however, again leads to a coupling of $h$ and $k$, and thus destroys unconditional convergence. Using 
the Newton method for the midpoint scheme seems to allow for larger time-steps, see \cite{DSM1} and also \cite{sllg}, where an efficient Newton-multigrid nonlinear solver has been proposed. 

In~\cite{alouges2008}, an unconditionally convergent projection-type integrator is proposed, which, despite the nonlinearity of LLG, only requires the solution of one linear system per timestep. The effective field in this work, however, only covers microcrystalline exchange effects and is thus quite restricted. In the subsequent works~\cite{alouges2011, mathmod2012, gamm2011} the analysis for this integrator was widened to cover more general (linear) field contributions, where only the highest-order exchange contribution is treated implicitly, whereas the other contributions are treated explicitly. This allows to minimize computational effort while still maintaing unconditional convergence. Finally, in the very recent work~\cite{multiscale}, the authors could show unconditional convergence of this integrator, where the effective field consists of some general 
energy contributions, which are only supposed to fulfill a certain set of properties. 
This particularly covers some nonlinear contributions, as well as certain multiscale problems. In addition, it is shown in~\cite{multiscale} that errors arising due to approximate computation of field contributions like e.g.\ the demagnetizing field can be incorporated into the analysis. 

In~\cite{alouges2012}, the authors also investigate a higher-order extension of this algorithm which, however, requires implicit treatment of nonlocal contributions like the magnetostatic strayfield.

In our work, we extend the analysis of the aforementioned works and show that the integrator from~\cite{alouges2008} can be coupled with a weak formulation of the full Maxwell system~\eqref{eq:mllg2}--\eqref{eq:mllg3}. For the integration of this system, we propose two algorithms that only require the solution of one (Algorithm~\ref{alg:1}) resp.\ two linear systems (Algorithm~\ref{alg:2}) per timestep while still guaranteeing unconditional convergence (Theorem~\ref{thm:convergence}). 
The contribution of the present work can be summarized as follows:
\begin{itemize}
\item We extend the linear integrator from~\cite{alouges2008, mathmod2012} to time-dependent contributions of the effective field by considering the full Maxwell equations instead of the magnetostatic simplification.
\item Unlike~\cite{banas}, at most two linear systems per timestep, instead of a coupled nonlinear system, need to be solved. Nevertheless, we still prove unconditional convergence.
\item Unlike~\cite{banas}, the decoupling of the Maxwell and the LLG part in the integrator of Algorithm~\ref{alg:2} is rigorously included into the convergence analysis of the time-marching scheme.
\end{itemize}

\textbf{Outline.}
The remainder of this paper is organized as follows: In Section~\ref{sec:intro}, we recall the mathematical model for the full Maxwell-LLG system (MLLG) and recall the notion of a weak solution (Definition~\ref{def:weak_sol}). In Section~\ref{sec:prelim}, we collect some notation and preliminaries, as well as the definition of the discrete ansatz spaces and their corresponding interpolation operators. In Section~\ref{sec:algo}, we propose two algorithms (Algorithm~\ref{alg:1} and~\ref{alg:2}) to approximate the MLLG system numerically. The large 
Section~\ref{sec:convergence} is then devoted to our main convergence result (Theorem~\ref{thm:convergence}) and its proof. Finally, in Section~\ref{sec:numerics}, some numerical results conclude this work.

\section{Model Problem}\label{sec:intro}

We consider the Maxwell-Landau-Lifshitz-Gilbert equation (MLLG) which describes the evolution of the magnetization of a ferromagnetic body that occupies the domain $\omega \subsub \Omega \subseteq \R^3$. For a given damping parameter $\alpha > 0$, the magnetization $\mmm :(0,T) \times \omega \rightarrow \SSS^2$ and the electric and magnetic fields $\EEE, \HHH:(0,T) \times \Omega \rightarrow \R^3$ satisfy the MLLG system
\begin{subequations}\label{eq:mllg}
\begin{align}
&\mmm_t - \alpha \mmm \times \mmm_t = - \mmm \times \heff \quad \text{ in } \omega_T := (0,T) \times \omega\label{eq:mllg1}\\
&\eps_0 \EEE_t - \nabla \times \HHH + \sigma \chi_\omega\EEE = -\JJJ \quad \text{ in } \Omega_T := (0,T) \times \Omega\label{eq:mllg2}\\
&\mu_0 \HHH_t + \nabla \times \EEE = -\mu_0 \mmm_t \quad \text{ in } \Omega_T,\label{eq:mllg3}
\end{align}
where the effective field $\heff$ consists of $\heff = C_e \Delta \mmm + \HHH + \pi(\mmm)$ for some general energy contribution $\pi$ which is assumed to fulfill a certain set of properties, see~\eqref{eq:assum2}--\eqref{eq:assum3}. This is in analogy to~\cite{multiscale}. We stress that, with the techniques from~\cite{multiscale}, an approximation $\pi_h$ of $\pi$ can be included into the analysis, as well. 
We emphasize that throughout this work, the case $\heff = C_e \Delta \mmm + \HHH + C_a D\Phi(\mmm) + \HHH_{ext}$ is particularly covered. Here, $\Phi(\cdot)$ denotes the crystalline anisotropy density and $\HHH_{ext}$ is a given applied field. The constants $\eps_0, \mu_0 \ge 0$ denote the electric and magnetic permeability of free space, respectively, and the constant $\sigma \ge 0$ stands for the conductivity of the ferromagnetic domain $\omega$. The field $\JJJ : \Omega_T \rightarrow \R^3$ describes an applied current density and $\chi_\omega : \Omega \rightarrow \{0,1\}$ is the characteristic function of $\omega$. As is usually done for simplicity, we assume $\Omega \subset \R^3$ to be bounded with perfectly conducting outer surface $\partial \Omega$ into which the ferromagnet $\omega \subsub \Omega$ is embedded, and $\Omega \backslash \overline \omega$ is assumed to be vacuum. In addition, the MLLG system~\eqref{eq:mllg} is supplemented by initial conditions
\begin{align}\label{eq:init}
\mmm(0,\cdot) = \mmm^0  \text{ in } \omega \quad \text{and} \quad \EEE(0,\cdot) = \EEE^0, \quad \HHH(0,\cdot) = \HHH^0  \text{ in } \Omega
\end{align}
as well as boundary conditions
\begin{align}\label{eq:boundary}
\partial_\nnn\mmm = 0  \text{ on } \partial \omega_T, \qquad \EEE\times\nnn = 0  \text{ on } \partial \Omega_T.
\end{align}
%
Note that the side constraint $|\mmm| = 1$ a.e.\ in $\omega_T$ does not need to be enforced explicitely, but follows from $|\mmm^0| = 1$ a.e.\ in $\omega$ and $\partial_t |\mmm|^2 = 2 \mmm \cdot \mmm_t = 0$ in $\omega_T$, which is a consequence of~\eqref{eq:mllg1}. This behaviour should also be reflected by the numerical integrator.
In analogy to \cite{carbou, banas}, we assume the given data to satisfy
\begin{align}\label{eq:data}
\mmm^0 \in H^1(\omega, \SSS^2), \qquad \HHH^0, \EEE^0 \in \L^2(\Omega, \R^3), \qquad \JJJ \in \L^2(\Omega_T, \R^3)
\end{align}
as well as
%
\begin{align}\label{eq:consistency}
\diver(\HHH^0 + \chi_\omega \mmm^0) = 0 \quad \text{ in } \Omega, \qquad \langle\HHH^0 + \chi_\omega\mmm^0, \nnn\rangle = 0 \quad \text{ on } \partial \Omega.
\end{align}
\end{subequations}
With the space 
\begin{align*}
\HHH_0(\curl, \Omega) := \set{\varphi \in \L^2(\Omega)}{\nabla \times \varphi \in \L^2(\Omega), \varphi \times \nnn = 0 \text{ on } \Gamma},
\end{align*}
we now recall the notion of a weak solution of~\eqref{eq:mllg1}--\eqref{eq:mllg3} from~\cite{carbou}.
\begin{definition}\label{def:weak_sol}
Given~\eqref{eq:data}--\eqref{eq:consistency}, the tupel $(\mmm, \EEE, \HHH)$ is called a weak solution of MLLG if,
\begin{itemize}
\item[(i)] $\mmm \in \H^1(\omega_T)$ with $|\mmm| = 1$ almost everywhere in $\omega_T$ and $(\EEE, \HHH) \in \L^2(\Omega_T)$;
\item[(ii)] for all $\vphi \in C^\infty(\omega_T)$ and $\zzeta \in C_c^\infty\big([0,T); C^\infty(\Omega) \cap \HHH^0(\emph{\curl}, \Omega)\big),$ we have
\begin{align}\label{eq:weak_sol1}
\int_{\omega_T} \langle\mmm_t, \vphi\rangle - \alpha \int_{\omega_T} \langle(\mmm \times \mmm_t), \vphi\rangle &= -C_e\int_{\omega_T} \langle(\nabla \mmm\times\mmm), \nabla\vphi\rangle\\\nonumber
&\quad+ \int_{\omega_T}\langle(\HHH \times \mmm),\vphi\rangle + \int_{\omega_T}\langle(\pi(\mmm)\times \mmm), \vphi\rangle, \\
-\eps_0\int_{\Omega_T}\langle\EEE, \zzeta_t\rangle - \int_{\Omega_T}\langle\HHH,\nabla \times \zzeta\rangle
+\sigma \int_{\omega_T}\langle\EEE, \zzeta\rangle &= -\int_{\Omega_T}\langle\JJJ, \zzeta\rangle + \eps_0 \int_\Omega \langle\EEE^0, \zzeta(0, \cdot)\rangle,\label{eq:weak_sol2}\\
-\mu_0\int_{\Omega_T}\langle\HHH, \zzeta_t \rangle + \int_{\Omega_T}\langle\EEE, \nabla \times \zzeta \rangle &= - \mu_0\int_{\omega_T} \langle\mmm_t, \zzeta\rangle+ \mu_0 \int_\Omega \langle\HHH^0, \zzeta(0, \cdot)\rangle;\label{eq:weak_sol3} 
\end{align}
\item[(iii)] there holds $\mmm(0,\cdot) = \mmm^0$ in the sense of traces;
\item[(iv)] for almost all $t' \in (0,T)$, we have bounded energy
\begin{align}\label{eq:energy}
\norm{\nabla \mmm(t')}{\L^2(\omega)}^2 + \norm{\mmm_t}{\L^2(\omega_t')}^2 &+ \norm{\HHH(t')}{\L^2(\Omega)}^2 + \norm{\EEE(t')}{\L^2(\Omega)}^2 
 \le C,
\end{align}
where $C>0$ is independent of $t$.
\end{itemize}
\end{definition}
Existence of weak solutions was first shown in~\cite{carbou}. We note, however, that our analysis is constructive in the sense that it also proves existence. 

\begin{remark}
Under additional assumptions on the general contribution $\pi(\cdot)$, namely that $\pi(\cdot)$ is self-adjoint with            
$\norm{\pi(\nnn)}{\L^4(\omega)}\le C$ for all $\nnn \in \L^2(\omega)$ with $|\nnn| \le 1$ almost everywhere, the energy estimate~\eqref{eq:energy} can be improved. The same techniques as in~\cite[Lemma A.1]{multiscale} then show for almost all $t' \in (0,T)$ and $\eps >0$
\begin{align*}
\EE(\mmm, \HHH, \EEE)(t') + 2(\alpha-\eps) \mu_0 \norm{\mmm_t}{\L^2(\omega_{t'})}^2 + 2\sigma \norm{\EEE}{\L^2(\omega_{t'})}^2  \le \EE(\mmm, \HHH, \EEE)(0) - \int_0^{t'} (\JJJ, \EEE),
\end{align*}
where
\begin{align*}
\EE(\mmm, \HHH, \EEE) := \mu_0 C_e \norm{\nabla \mmm}{\L^2(\omega)}^2 + \mu_0 \norm{\HHH}{\L^2(\Omega)}^2 + \eps_0 \norm{\EEE}{\L^2(\omega)}^2 - \mu_0 \langle \pi(\mmm), \mmm \rangle.
\end{align*}
This is in analogy to~\cite{banas}. In particular, the above assumptions are fulfilled in case of vanishing applied field $\HHH_{ext} \equiv 0$ and if $\pi(\cdot)$ denotes the uniaxial anisotropy density.
\end{remark}
\section{Preliminaries}\label{sec:prelim}
For time discretization, we impose a uniform partition $0 = t_0 < t_1 < \hdots < t_N = T$ of the time interval $[0,T]$. The timestep size is denoted by $k=k_j := t_{j+1} - t_j$ for $j = 0, \hdots, N-1$. For each (discrete) function $\vphi, \vphi^j := \vphi(t_j)$ denotes the evaluation at time $t_j$. 
Furthermore, we write $d_t \vphi^{j+1} := (\vphi^{j+1} - \vphi^j)/k$ for $j \ge 1$, and $\vphi^{j+1/2} := (\vphi^{j+1} + \vphi^j)/2$ for $j \ge 0$ and a sequence $\{\vphi^j\}_{j\ge 0}$.

For the spatial discretization, let $\TT_h^{\Omega}$ be a regular triangulation of the polyhedral bounded Lipschitz domain $\Omega \subset \R^3$ into compact and non-degenerate tetrahedra. By $\TT_h$, we denote its restriction to $\omega \subsub \Omega$, where we assume that $\omega$ is resolved, i.e.\
\begin{align*}
\TT_h = \TT_h^{\Omega}|_\omega = \set{T \in \TT_h^{\Omega}}{T \cap \omega \neq \emptyset} \quad \text{and} \quad \overline \omega = \bigcup_{T \in \TT_h} T.
\end{align*}
By $\SS^1(\TT_h)$ we denote the standard $\PP^1$-FEM space of globally continuous and piecewise affine functions from $\omega$ to $\R^3$
\begin{align*}
\SS^1(\TT_h) := \{\pphi_h \in C(\overline \omega, \R^3) : \pphi_h|_K \in \PP_1(K) \text{ for all } K \in \TT_h\}.
\end{align*}
By $\II_h: C(\Omega) \to \SS^1(\TT_h)$, we denote the nodal interpolation operator onto this space.
Now, let the set of nodes of the triangulation $\TT_h$ be denoted by $\NN_h$.
For discretization of the magnetization $\mmm$ in the LLG equation~\eqref{eq:mllg1}, we define the set of admissible discrete magnetizations by
\begin{align*}
\MM_h := \{\pphi_h \in \SS^1(\TT_h) : |\pphi_h(\zzz)| = 1 \text{ for all } \zzz \in \NN_h\}.
\end{align*}
Due to the modulus constraint $|\mmm(t)| = 1$, and therefore $\mmm_t\cdot \mmm = 0$ almost everywhere in $\omega_T$, we discretize the time derivative $\vvv(t_j) := \mmm_t(t_j)$ in the discrete tangent space which is defined by
\begin{align*}
\KK_{\pphi_h} := \{\ppsi_h \in \SS^1(\TT_h|_{\omega}) : \ppsi_h(\zzz) \cdot \pphi_h(\zzz) = 0 \text{ for all } \zzz \in \NN_h\}
\end{align*}
for any $\pphi_h \in \MM_h$. For two vectors $\mathbf{x}, \mathbf{y} \in \R^3, \mathbf{x}\cdot \mathbf{y}$ stands for the usual scalar product in $\R^3$.

To discretize Maxwell's equations~\eqref{eq:mllg2}--\eqref{eq:mllg3}, we use conforming ansatz spaces $\XX_h \subset \HHH^0(\curl;\Omega)$, $\YY_h \subset \L^2(\Omega)$ subordinate to $\TT_h^{\Omega}$ which additionally fullfil $\nabla \times \XX_h \subset \YY_h$. In analogy to~\cite{banas}, we choose first order edge elements
\begin{align*}
\XX_h := \{\vphi_h \in \HHH^0(\curl; \Omega) : \vphi_h|_K \in \PP_1(K) \text{ for all } K \in \TT_h^{\Omega}\}
\end{align*}
and piecewise constants
\begin{align*}
\YY_h :=\{\zzeta_h \in \L^2(\Omega) : \zzeta_h|_K \in \PP_0(K) \text{ for all } K \in \TT_h^{\Omega}\},
\end{align*}

cf.\ \cite[Chapter 8.5]{monk}. Associated with $\XX_h,$ let $\II_{\XX_h}: \H^2(\Omega) \to \XX_h$ denote the corresponding nodal FEM interpolator. Moreover, let 
\begin{align*}
\II_{\YY_h} : \L^2(\Omega) \rightarrow \YY_h
\end{align*}
denote the $\L^2$-orthogonal projection, characterized by
\begin{align*}
\quad (\zzeta - \II_{\YY_h}\zzeta, \mathbf{y}_h) = 0 \quad \text{ for all } \zzeta \in \L^2(\Omega) \text{ and } \mathbf{y}_h \in \YY_h.
\end{align*}
By standard estimates, see e.g. \cite{monk, brennerscott}, one derives the approximation properties 
\begin{align}
&\norm{\vphi - \II_{\XX_h}\vphi}{\L^2(\Omega)} + h\norm{\nabla \times (\vphi - \II_{\XX_h}\vphi)}{\L^2(\Omega)} \le C\, h^2\norm{\nabla ^2 \vphi}{\L^2(\Omega)}\label{eq:interpol1}\\
& \norm{\zzeta - \II_{\YY_h}\zzeta}{\L^2(\Omega)} \le C\,h\norm{\zzeta}{\H^1(\Omega)}\label{eq:interpol2}
\end{align}
for all $\vphi \in \H^2(\Omega)$ and $\zzeta \in \H^1(\Omega)$. 

Finally, given two expressions $A$ and $B$, we write $A \lesssim B$ if there exists a constant $c>0$ which is independent of $h$ and $k$, such that $A \le cB$. In the case $A \lesssim B$ and $B \lesssim A$, we write $A \simeq B$.
\section{Numerical algorithms}\label{sec:algo}

We recall that the  LLG equation~\eqref{eq:mllg1} can equivalently be stated by
\begin{align}\label{eq:llg:algo}
\alpha \mmm_t + \mmm \times \mmm_t = \heff-(\mmm \cdot \heff)\mmm
\end{align}
under the constraint $|\mmm| = 1$ almost everywhere in $\Omega_T$.
This formulation will now be used to construct the upcoming numerical schemes. 
Following the approaches of \textsc{Alouges et al.}~\cite{alouges2008, alouges2011} and \textsc{Bruckner et al.} from~\cite{multiscale}, we propose two algorithms for the numerical integration of MLLG, where the first one follows the lines of~\cite{banas}. 

\subsection{MLLG integrators}\label{sec:integrators}
For ease of presentation, we assume that the applied field $\JJJ$ is continuous in time, i.e.\ $\JJJ \in C\big([0,T]; \L^2(\Omega)\big)$ so that $\JJJ^j: = \JJJ(t_j)$ is meaningful. We emphasize, however, that this is not necessary for our convergence analysis.
\begin{algorithm}\label{alg:1}
\begin{itemize}
Input: Initital data $\mmm^0$, $\EEE^0$, and $\HHH^0$, parameter $\theta \in [0,1]$, counter $j = 0$. For all $j = 0, \hdots, N-1$ iterate:
\item[(i)] Compute unique solution $(\vvv_h^j, \EEE_h^{j+1}, \HHH_h^{j+1}) \in (\KK_{\mmm_h^j}, \XX_h, \YY_h)$ such that for all $(\pphi_h, \ppsi_h, \zzeta_h) \in \KK_{\mmm_h^j} \times \XX_h \times \YY_h$ holds
\begin{subequations}\label{eq:mllg_alg1}
\begin{equation}\label{eq:mllg_alg1_1}
\begin{split}
&\alpha(\vvv_h^j, \pphi_h) + \big((\mmm_h^j \times \vvv_h^j), \pphi_h\big) = -C_e \big(\nabla(\mmm_h^j + \theta k \vvv_h^j), \nabla \pphi_h\big) \\&\hspace*{35ex}+ (\HHH_h^{j+1/2}, \pphi_h) + \big(\pi(\mmm_h^j), \pphi_h\big),
\end{split}
\end{equation}
\begin{align}\label{eq:mllg_alg1_2}
&\hspace*{3ex}\eps_0 (d_t \EEE_h^{j+1}, \ppsi_h) - (\HHH_h^{j+1/2}, \nabla \times \ppsi_h) + \sigma(\chi_\omega\EEE_h^{j+1/2}, \ppsi_h) = -(\JJJ^{j+1/2}, \ppsi_h),\\\label{eq:mllg_alg1_3}
&\hspace*{3ex}\mu_0(d_t\HHH_h^{j+1}, \zzeta_h) + (\nabla \times \EEE_h^{j+1/2}, \zzeta_h) = -\mu_0(\vvv_h^j, \zzeta_h).
\end{align}
\end{subequations}
\item[(ii)] Define $\mmm_h^{j+1} \in \MM_h$ nodewise by $\displaystyle \mmm_h^{j+1}(\zzz) = \frac{\mmm_h^j(\zzz) + k\vvv_h^j(\zzz)}{|\mmm_h^j(\zzz) + k\vvv_h^j(\zzz)|}$ for all $\zzz \in \NN_h$
\end{itemize}
\end{algorithm}
For the sake of computational and implementational ease, LLG and Maxwell's equations can be decoupled which leads to only two linear systems per timestep. This modification is explicitely stated in the second algorithm.
\begin{algorithm}\label{alg:2}
\begin{itemize}
Input: Initital data $\mmm^0$, $\EEE^0$, and $\HHH^0$, parameter $\theta \in [0,1]$, counter $j = 0$. For all $j=0, \hdots, N-1$ iterate:
\item[(i)] Compute unique solution $\vvv_h^j \in \KK_{\mmm_h^j}$ such that for all $\pphi_h \in \KK_{\mmm_h^j}$ holds
\begin{subequations}
\begin{equation}\label{eq:mllg_alg2_1}
\begin{split}
&\alpha(\vvv_h^j, \pphi_h) + \big((\mmm_h^j \times \vvv_h^j), \pphi_h\big) = -C_e \big(\nabla(\mmm_h^j + \theta k \vvv_h^j), \nabla \pphi_h\big) \\&\hspace*{35ex}+ (\HHH_h^{j}, \pphi_h) + \big(\pi(\mmm_h^j), \pphi_h\big).
\end{split}
\end{equation}
\item[(ii)] Compute unique solution $(\EEE_h^{j+1}, \HHH_h^{j+1}) \in (\XX_h, \YY_h)$ such that for all $(\ppsi_h, \zzeta_h) \in \XX_h \times \YY_h$ holds
\begin{align}\label{eq:mllg_alg2_2}
&\hspace*{4ex}\eps_0 (d_t \EEE_h^{j+1}, \ppsi_h) - (\HHH_h^{j+1}, \nabla \times \ppsi_h) + \sigma(\chi_\omega\EEE_h^{j+1}, \ppsi_h) = -(\JJJ^{j}, \ppsi_h),\\\label{eq:mllg_alg2_3}
&\hspace*{4ex}\mu_0(d_t\HHH_h^{j+1}, \zzeta_h) + (\nabla \times \EEE_h^{j+1}, \zzeta_h) = -\mu_0(\vvv_h^j, \zzeta_h).
\end{align}
\item[(iii)] Define $\mmm_h^{j+1} \in \MM_h$ nodewise by $\displaystyle \mmm_h^{j+1}(\zzz) = \frac{\mmm_h^j(\zzz) + k\vvv_h^j(\zzz)}{|\mmm_h^j(\zzz) + k\vvv_h^j(\zzz)|}$ for all $\zzz \in \NN_h$.
\end{subequations}
\end{itemize}
\end{algorithm}

\subsection{Unique solvability}\label{sec:solve}
In this brief section, we show that the two above algorithms are indeed well defined and admit unique solutions in each step of the iterative loop. We start with Algorithm~\ref{alg:1}.

\begin{lemma}\label{lem:solve}
Algorithm~\ref{alg:1} is well defined in the sense that  in each step $j = 0,\hdots, N-1$ of the loop, there exist unique solutions $(\mmm_h^{j+1}, \vvv_h^j, \EEE_h^{j+1}, \HHH_h^{j+1})$.
\end{lemma}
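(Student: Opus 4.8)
The strategy is to observe that in each step of the loop the three equations~\eqref{eq:mllg_alg1_1}--\eqref{eq:mllg_alg1_3} constitute a finite-dimensional \emph{linear} system for the unknown tuple $(\vvv_h^j, \EEE_h^{j+1}, \HHH_h^{j+1}) \in \KK_{\mmm_h^j} \times \XX_h \times \YY_h$, after which step~(ii) defines $\mmm_h^{j+1}$ explicitly. Since the system is square (the test and trial spaces coincide), it suffices to prove \emph{uniqueness}, i.e.\ that the associated homogeneous system admits only the trivial solution; existence then follows by the rank–nullity theorem in finite dimensions. I would proceed by induction on $j$: assuming $\mmm_h^j \in \MM_h$ is given (for $j=0$ this is the prescribed initial datum, interpolated into $\MM_h$), the spaces $\KK_{\mmm_h^j}$, $\XX_h$, and $\YY_h$ are all well defined, and $\mmm_h^{j+1}$ produced in~(ii) lies in $\MM_h$ by construction since each nodal value is normalized, closing the induction.

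The core of the argument is the homogeneous system. Setting the data $\mmm_h^j$-dependent right-hand terms and $\JJJ^{j+1/2}$, together with the ``previous'' values $\EEE_h^j, \HHH_h^j$, to zero, the plan is to test~\eqref{eq:mllg_alg1_1} with $\pphi_h = \vvv_h^j$, equation~\eqref{eq:mllg_alg1_2} with $\ppsi_h = \EEE_h^{j+1}$, and~\eqref{eq:mllg_alg1_3} with $\zzeta_h = \HHH_h^{j+1}$, and then to add suitably scaled versions so that the cross-coupling terms cancel. The antisymmetry $\big((\mmm_h^j \times \vvv_h^j), \vvv_h^j\big) = 0$ kills the gyroscopic term, leaving $\alpha \norm{\vvv_h^j}{\L^2(\omega)}^2 + C_e \theta k \norm{\nabla \vvv_h^j}{\L^2(\omega)}^2$ as a coercive contribution from the first line. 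For the Maxwell block the key observation is that the coupling term $(\HHH_h^{j+1/2}, \nabla \times \EEE_h^{j+1})$ appearing via~\eqref{eq:mllg_alg1_2} cancels against $(\nabla \times \EEE_h^{j+1/2}, \HHH_h^{j+1})$ from~\eqref{eq:mllg_alg1_3} once the two equations are combined with weights $\mu_0$ and $1$ respectively (using that $\nabla \times \XX_h \subset \YY_h$, so these pairings are well defined), and similarly the $(\HHH_h^{j+1/2}, \vvv_h^j)$–$(\vvv_h^j, \HHH_h^{j+1})$ coupling between the LLG and Maxwell parts telescopes. In the homogeneous case one is left with a sum of manifestly non-negative terms, namely $\alpha \norm{\vvv_h^j}{\L^2(\omega)}^2$, the $\eps_0$– and $\mu_0$–weighted $\L^2$–norms of $\EEE_h^{j+1}$ and $\HHH_h^{j+1}$, and the nonnegative $\sigma$– and $\theta k$–terms, whose vanishing forces $\vvv_h^j = 0$, $\EEE_h^{j+1} = 0$, $\HHH_h^{j+1} = 0$.

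The main obstacle I anticipate is the bookkeeping of the midpoint quantities $\EEE_h^{j+1/2} = (\EEE_h^{j+1}+\EEE_h^j)/2$ and $\HHH_h^{j+1/2}$ and the discrete time derivatives $d_t$: in the homogeneous system the terms $\EEE_h^j, \HHH_h^j$ are treated as (vanishing) data, so that $\EEE_h^{j+1/2}$ collapses to $\tfrac12\EEE_h^{j+1}$ and $d_t\EEE_h^{j+1}$ to $\EEE_h^{j+1}/k$, and one must verify that the $\eps_0/k$– and $\mu_0/k$–weighted self-terms genuinely dominate and that the signs in the curl-coupling line up so that no indefinite cross term survives. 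A subtle point worth checking is that the coercivity of the LLG block does \emph{not} require $\theta>0$: the term $\alpha \norm{\vvv_h^j}{\L^2(\omega)}^2$ alone already controls $\vvv_h^j$ since $\alpha>0$, so the argument is uniform in $\theta \in [0,1]$ and, importantly, remains valid in the degenerate parameter regime $\eps_0,\mu_0,\sigma \ge 0$ where some weights may vanish—one only needs the $\eps_0$– and $\mu_0$–terms to be present, which is guaranteed whenever the corresponding time-derivative terms are genuinely in the equations. Once Lemma~\ref{lem:solve} is established for Algorithm~\ref{alg:1}, the same energy test applies \emph{a fortiori} to the decoupled Algorithm~\ref{alg:2}, where~\eqref{eq:mllg_alg2_1} is solved independently of the Maxwell block and then~\eqref{eq:mllg_alg2_2}--\eqref{eq:mllg_alg2_3} form their own square linear system handled by the identical cancellation.
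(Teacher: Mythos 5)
Your overall strategy coincides with the paper's: both arguments reduce the claim to positive definiteness of the bilinear form obtained by weighting and adding the three equations of~\eqref{eq:mllg_alg1}, test with the solution itself, use $\big((\mmm_h^j\times\vvv_h^j),\vvv_h^j\big)=0$ together with the cancellation of the curl pairing, and invoke finite dimensionality to pass from uniqueness to existence. Where you genuinely differ is the LLG--Maxwell cross term: the paper keeps a residual $(\c{alg1}-\tfrac12)\mu_0(\pphi,\zzeta)$ and absorbs it by Young's inequality through a rather involved choice of auxiliary constants, whereas you propose to choose the weights so that this term cancels identically. That variant is cleaner and, in the paper's notation, amounts to simply taking $\c{alg1}=1/2$; it makes positive definiteness immediate for every $k$.

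The concrete weights you state, however, do not realize the cancellations you claim. The curl terms $-(\HHH_h^{j+1/2},\nabla\times\ppsi_h)$ in~\eqref{eq:mllg_alg1_2} and $+(\nabla\times\EEE_h^{j+1/2},\zzeta_h)$ in~\eqref{eq:mllg_alg1_3} carry no physical prefactor, so on the diagonal they cancel only if the two Maxwell equations receive the \emph{same} weight; with your choice ($\mu_0$ for~\eqref{eq:mllg_alg1_2} and $1$ for~\eqref{eq:mllg_alg1_3}) a residual $\tfrac{1-\mu_0}{2}(\nabla\times\EEE_h^{j+1},\HHH_h^{j+1})$ survives. Likewise, the coupling $-\tfrac{w_1}{2}(\HHH_h^{j+1},\vvv_h^j)+w_3\mu_0(\vvv_h^j,\HHH_h^{j+1})$ between~\eqref{eq:mllg_alg1_1} and~\eqref{eq:mllg_alg1_3} ``telescopes'' only if the weight $w_1$ on the LLG equation satisfies $w_1=2\mu_0 w_3$, and you never fix $w_1$. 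Both defects are repaired by taking, e.g., the weights $(2\mu_0,1,1)$, after which the homogeneous diagonal reduces to $2\mu_0\alpha\norm{\vvv_h^j}{\L^2(\omega)}^2+2\mu_0 C_e\theta k\norm{\nabla\vvv_h^j}{\L^2(\omega)}^2+\tfrac{\eps_0}{k}\norm{\EEE_h^{j+1}}{\L^2(\Omega)}^2+\tfrac{\sigma}{2}\norm{\chi_\omega\EEE_h^{j+1}}{\L^2(\Omega)}^2+\tfrac{\mu_0}{k}\norm{\HHH_h^{j+1}}{\L^2(\Omega)}^2$ and the conclusion follows. Two further small points: you do need $\eps_0,\mu_0>0$ strictly (as does the paper), since for $\eps_0=0$ the only remaining control of $\EEE_h^{j+1}$ is the $\sigma$-term, which lives on $\omega\subsub\Omega$ only; and in step~(ii) you should note, as the paper does, that $\vvv_h^j(\zzz)\perp\mmm_h^j(\zzz)$ gives $|\mmm_h^j(\zzz)+k\vvv_h^j(\zzz)|\ge 1$, so the normalization never divides by zero.
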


\begin{proof}
We multiply the first equation of~\eqref{eq:mllg_alg1} by $\mu_0$ and the second and third equation by some free parameter $\setc{alg1} > 0$
to define the bilinear form
$a^j(\cdot, \cdot)$ on $(\KK_{\mmm_h^j}, \XX_h, \YY_h)$ by
\begin{align*}
a^j\big((\PPHI, \PPSI, \TTHETA),(\pphi, \ppsi, \zzeta)\big) &:= \alpha\mu_0\,(\PPHI,\pphi) + \mu_0\,\big((\mmm_h^j \times \PPHI), \pphi\big) + \mu_0C_e \theta k\,( \nabla \PPHI,\nabla \pphi) - \frac{\mu_0}{2}(\PPHI,\zzeta)\\
&\quad+ \frac{\c{alg1}\eps_0}{k}( \PPSI,\ppsi) -\frac{\c{alg1}}{2}(\PPSI, \nabla \times \zzeta) + \frac{\c{alg1}\sigma}{2}(\chi_w \PPSI, \ppsi) \\
&\quad+ \frac{\c{alg1}\mu_0}{k}(\TTHETA, \zzeta) +\frac{\c{alg1}}{2} (\nabla \times \TTHETA, \ppsi) + \c{alg1}\mu_0\,(\TTHETA, \pphi)
\end{align*}

and the linear functional $L^j(\cdot)$ on $(\KK_{\mmm_h^j}, \XX_h, \YY_h)$ by
\begin{align*}
L^j\big((\pphi, \ppsi, \zzeta)\big) &:=-\mu_0C_e \, (\nabla \mmm_h^j, \nabla \pphi) +  \frac{\mu_0}{2} (\HHH_h^j, \pphi) +\mu_0 \,\big(\pi(\mmm_h^j),\pphi\big) \\
&\quad -\c{alg1}(\JJJ^{j+1/2},\ppsi) - \frac{\c{alg1}\eps_0}{k}(\EEE_h^j, \ppsi) + \frac{\c{alg1}}{2}(\HHH_h^j, \nabla \times \ppsi) - \frac{\c{alg1}\sigma}{2}(\chi_w\EEE_h^j, \ppsi)\\
&\quad - \frac{\c{alg1}\mu_0}{k} (\HHH_h^j, \zzeta) - \frac{\c{alg1}}{2}(\nabla \times \EEE_h^j, \zzeta).
\end{align*}
To ease the readability, the respective first lines of these definitions stem from~\eqref{eq:mllg_alg1_1}, the second from~\eqref{eq:mllg_alg1_2}, and the third from~\eqref{eq:mllg_alg1_3}.
Clearly, \eqref{eq:mllg_alg1} is equivalent to 
\begin{align*}
a^j\big((\vvv_h^j, \EEE_h^{j+1}, \HHH_h^{j+1}), (\pphi_h, \ppsi_h, \zzeta_h)\big) = L\big((\pphi_h, \ppsi_h, \zzeta_h)\big) \quad \text{ for all } (\pphi_h, \ppsi_h, \zzeta_h) \in \KK_{\mmm_h^j} \times \XX_h \times \YY_h.
\end{align*}
Next, we aim to show that the bilinear form $a^j(\cdot, \cdot)$
is positive definite on $\KK_{\mmm_h^j} \times \XX_h \times \YY_h$. Usage of the H\"older inequality reveals that for all $ (\vphi, \ppsi, \zzeta) \in \KK_{\mmm_h^j} \times \XX_h \times \YY_h$ it holds that 
\begin{align*}
a^j\big((\pphi, \ppsi, \zzeta), (\pphi, \ppsi, \zzeta)\big) &= \alpha\mu_0(\pphi, \pphi) + \mu_0\big((\mmm_h^j \times \pphi), \pphi\big) + \mu_0C_e\theta k\,(\nabla \pphi, \nabla \pphi) - \frac{\mu_0}{2}\,(\pphi, \zzeta)\\
&\quad+ \frac{\c{alg1}\eps_0}{k}(\ppsi, \ppsi) - \frac{\c{alg1}}{2}(\zzeta, \nabla \times \ppsi) + \frac{\c{alg1}\sigma}{2}(\chi_w \ppsi, \ppsi) \\
&\quad+ \frac{\c{alg1}\mu_0}{k}(\zzeta, \zzeta) + \frac{\c{alg1}}{2}(\nabla \times \ppsi, \zzeta) + \c{alg1}\mu_0\,(\pphi, \zzeta)\\
&= \alpha\mu_0(\pphi, \pphi) + \mu_0C_e \theta k\,(\nabla \pphi, \nabla \pphi) + \big(\c{alg1} \mu_0 - \frac{\mu_0}{2}\big)(\pphi, \zzeta)\\
&\quad+ \frac{\c{alg1}\eps_0}{k}(\ppsi, \ppsi) + \frac{\c{alg1}\sigma}{2}(\chi_w \ppsi, \ppsi) + \frac{\c{alg1}\mu_0}{k}(\zzeta, \zzeta)\\
&\ge \underbrace{\big(\alpha -2\eps(\c{alg1}-1/2)\big)}_{=:a}\mu_0\norm{\pphi}{L^2(\omega)}^2 + \frac{\c{alg1} \eps_0}{k}\norm{\ppsi}{L^2(\Omega)}^2 + \underbrace{\big(\frac{\c{alg1}}{k} - \frac{\c{alg1}-1/2}{2\eps}\big)}_{=:b}\mu_0\norm{\zzeta}{L^2(\omega)}^2,
\end{align*}
where we have used
\begin{align*}
(\vphi, \zzeta) \ge - 2\eps\norm{\vphi}{L^2(\omega)}^2 - \frac{1}{2\eps}\norm{\zzeta}{L^2(\omega)}^2.
\end{align*}
In order to prove the desired result, we have to show $a,b > 0$ by choice of $\c{alg1}$. We make the following ansatz: Fix $\setc{alg1C} > k > 0$, choose $\eps > 0$ in such a way that
$0 < \alpha \c{alg1C} - 2 \eps^2 -2 \alpha \eps$, and define $\c{alg1} := \c{alg1C}/(2\c{alg1C}-4\eps)$. Then, Condition $a>0$ is equivalent to
\begin{align*}
0 < a = \alpha -2\eps\big(\frac12 \frac{\c{alg1C}}{\c{alg1C}-2\eps}-\frac12\big) = \alpha -\eps\frac{\c{alg1C}-\c{alg1C}+2\eps}{\c{alg1C}-2\eps} \quad \Longleftrightarrow \quad 0 < \alpha \c{alg1C}-2 \alpha \eps - 2\eps^2
\end{align*}
which is true due to the choice of $\eps$. The condition $b > 0$ now gives
\begin{align*}
0 < b = \frac{\c{alg1}}{k} - \frac{\c{alg1}-\frac12}{2\eps} \quad \Longleftrightarrow \quad k(\c{alg1}-\frac12)< 2\eps \c{alg1} \quad \Longleftrightarrow \quad k < \frac{2\eps \c{alg1}}{\c{alg1}-\frac12} = \c{alg1C},
\end{align*}
which
is automatically fulfilled due to the original choice of $k < \c{alg1C}$. In particular,~\eqref{eq:mllg_alg1} thus admits a unique solution $(\vvv_h^j, \EEE_h^{j+1}, \HHH_h^{j+1})$ in each step of the loop. From  $\vvv_h^j \in \KK_{\mmm_h^j}$ and the Pythagoras theorem, we get $|\mmm_h^j(\zzz)+k \vvv_h^j(\zzz)|^2 = |\mmm_h^j(\zzz)|^2 + k|\vvv_h^j(\zzz)|^2 \ge 1$. Hence, also step $(ii)$ of Algorithm~\ref{alg:1} is well defined. This concludes the proof.
\end{proof}

The following lemma states an analogous result for the second algorithm. 
\begin{lemma}\label{lem:solve:alg2}
Algorithm~\ref{alg:2} is well defined in the sense that it admits a unique solution at each step $j = 0, \hdots, N-1$ of the iterative loop.
\end{lemma}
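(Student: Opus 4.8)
The plan is to exploit the decoupling that distinguishes Algorithm~\ref{alg:2} from Algorithm~\ref{alg:1}: step~(i) now uses the previously computed $\HHH_h^j$ (rather than the unknown $\HHH_h^{j+1/2}$), and in step~(ii) the quantity $\vvv_h^j$ on the right-hand side is already known once step~(i) has been solved. Consequently, steps~(i) and~(ii) are two \emph{independent} square linear systems posed on finite-dimensional spaces. For each it therefore suffices to verify that the associated bilinear form is positive definite; on a finite-dimensional space this forces injectivity and hence bijectivity of the corresponding linear map (equivalently, one invokes the Lax--Milgram lemma), yielding a unique solution.

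For step~(i), I would consider the bilinear form
\[
a(\vvv,\pphi) := \alpha(\vvv,\pphi) + \big((\mmm_h^j \times \vvv),\pphi\big) + C_e\theta k\,(\nabla\vvv,\nabla\pphi)
\]
on $\KK_{\mmm_h^j}$ and test it against $\pphi = \vvv$. The cross-product term vanishes pointwise since $\mmm_h^j \times \vvv \perp \vvv$, so
\[
a(\vvv,\vvv) = \alpha\,\norm{\vvv}{L^2(\omega)}^2 + C_e\theta k\,\norm{\nabla\vvv}{L^2(\omega)}^2 \ge \alpha\,\norm{\vvv}{L^2(\omega)}^2,
\]
which is strictly positive for $\vvv \neq 0$ because $\alpha > 0$ (while $C_e,\theta,k \ge 0$). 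This settles the unique solvability of $\vvv_h^j$.

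For step~(ii), I would write out $d_t\EEE_h^{j+1} = (\EEE_h^{j+1}-\EEE_h^j)/k$ and $d_t\HHH_h^{j+1} = (\HHH_h^{j+1}-\HHH_h^j)/k$, move the known quantities $\EEE_h^j,\HHH_h^j,\JJJ^j,\vvv_h^j$ to the right-hand side, and collect the unknowns $(\EEE,\HHH) := (\EEE_h^{j+1},\HHH_h^{j+1})$ in the bilinear form
\[
b\big((\EEE,\HHH),(\ppsi,\zzeta)\big) := \frac{\eps_0}{k}(\EEE,\ppsi) - (\HHH,\nabla\times\ppsi) + \sigma(\chi_\omega\EEE,\ppsi) + \frac{\mu_0}{k}(\HHH,\zzeta) + (\nabla\times\EEE,\zzeta)
\]
on $\XX_h \times \YY_h$. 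Testing against $(\ppsi,\zzeta) = (\EEE,\HHH)$, the two curl couplings $-(\HHH,\nabla\times\EEE)$ and $(\nabla\times\EEE,\HHH)$ cancel by skew-symmetry, leaving
\[
b\big((\EEE,\HHH),(\EEE,\HHH)\big) = \frac{\eps_0}{k}\norm{\EEE}{L^2(\Omega)}^2 + \sigma\,\norm{\EEE}{L^2(\omega)}^2 + \frac{\mu_0}{k}\norm{\HHH}{L^2(\Omega)}^2,
\]
which is positive definite since $\eps_0,\mu_0 > 0$. This settles the unique solvability of $(\EEE_h^{j+1},\HHH_h^{j+1})$.

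Finally, step~(iii) is literally identical to Algorithm~\ref{alg:1}: since $\vvv_h^j \in \KK_{\mmm_h^j}$, the nodal orthogonality $\vvv_h^j(\zzz)\cdot\mmm_h^j(\zzz) = 0$ together with the Pythagoras theorem gives $|\mmm_h^j(\zzz)+k\vvv_h^j(\zzz)|^2 = 1 + k^2|\vvv_h^j(\zzz)|^2 \ge 1$, so the denominator never vanishes and $\mmm_h^{j+1} \in \MM_h$ is well defined. I do not expect any genuine obstacle here; indeed, the very purpose of the decoupling is that, in contrast to the proof of Lemma~\ref{lem:solve}, the combined system splits into two blocks that are each manifestly positive definite, so neither the auxiliary scaling constant $\c{alg1}$ nor the Young-type splitting of the $\vvv$--$\zzeta$ cross term is needed. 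The only points requiring (minor) care are the skew-symmetric cancellation of the curl coupling in step~(ii) and recording the sign hypotheses $\alpha>0$ and $\eps_0,\mu_0>0$.
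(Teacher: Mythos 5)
Your proposal is correct and follows essentially the same route as the paper: both split the step into the two decoupled linear systems, verify positive definiteness of each bilinear form on the finite-dimensional spaces (the cross-product term vanishing in step~(i), the two curl couplings cancelling in step~(ii)), and justify the nodal projection in step~(iii) via the Pythagoras identity exactly as in Lemma~\ref{lem:solve}. Your explicit remarks on the sign hypotheses $\alpha>0$ and $\eps_0,\mu_0>0$ are a welcome (if minor) addition to what the paper records.
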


\begin{proof}
For the first equation~\eqref{eq:mllg_alg2_1}, we define the bilinearform $a^j(\cdot, \cdot) : \KK_{\mmm_h^j} \times \KK_{\mmm_h^j} \rightarrow \R$ by
\begin{align*}
a^j(\PPHI,\pphi) := \alpha\,(\PPHI, \pphi)+ \big((\mmm_h^j \times \PPHI),\pphi \big)
 + \theta C_e k\,(\nabla \PPHI, \nabla \pphi) \end{align*}
and the functional
\begin{align*}
L^j(\pphi) := C_e (\nabla \mmm_h^j, \nabla \pphi) + (\HHH_h^j, \pphi) + (\pi(\mmm_h^j), \pphi).
\end{align*}
Obviously, $L^j(\cdot)$ is linear, while $a^j(\cdot, \cdot)$ is bilinear and positive definite, since
\begin{align*}
a^j(\pphi, \pphi) = \alpha\norm{\pphi}{\L^2(\omega)}^2 + \theta C_e k \norm{\nabla \pphi}{\L^2(\omega)}^2.
\end{align*}
Hence there exists a unique $\vvv_h^j \in \KK_{\mmm_h^j}$ solving \eqref{eq:mllg_alg2_1}. For the second equation \eqref{eq:mllg_alg2_2}, we have to consider the bilinear form $b(\cdot, \cdot) : (\XX_h, \YY_h) \times (\XX_h, \YY_h) \rightarrow \R$ defined by
\begin{align*}
b\big((\PPSI, \TTHETA),(\ppsi, \zzeta)\big) &:= \frac{\eps_0}{k}(\PPSI, \ppsi) - (\PPSI, \nabla \times \zzeta) + \sigma(\chi_w \PPSI, \ppsi) + \frac{\mu_0}{k}(\TTHETA, \zzeta) + (\nabla \times \TTHETA, \ppsi)
\end{align*}
which is continuous and positive definite, since
\begin{align*}
b\big((\pphi, \zzeta)(\pphi, \zzeta)\big) &= \frac{\eps_0}{k}(\pphi, \pphi) - (\zzeta, \nabla \times \pphi) + \sigma(\chi_w \pphi, \pphi) + \frac{\mu_0}{k}(\zzeta, \zzeta) + (\nabla \times \pphi, \zzeta)\\
 &=\frac{\eps_0}{k}(\pphi, \pphi) + \sigma(\chi_w \pphi, \pphi) + \frac{\mu_0}{k}(\zzeta, \zzeta)\\
 & = \frac{\eps_0}{k}\norm{\pphi}{L^2(\Omega)}^2 + \frac{\mu_0}{k}\norm{\zzeta}{L^2(\Omega)}^2+ \sigma \norm{\pphi}{\L^2(\omega)}^2
\end{align*}
and the functional
\begin{align*}
\widetilde L^j\big((\ppsi, \zzeta)\big):= -(\JJJ^j, \ppsi) - \mu_0(\vvv_h^j, \zzeta)
\end{align*}
which is obviously linear.
Due to finite dimension, there is a unique solution $(\EEE_h^{j+1}, \HHH_h^{j+1})$ of~\eqref{eq:mllg_alg2_2}. As in Lemma~\ref{lem:solve}, we see that step $(iii)$ of Algorithm~\ref{alg:2} is also well-defined.
\end{proof}
\section{Main theorem \& Convergence analysis}\label{sec:convergence}
In this section, we aim to show that the two preceeding algorithms indeed define convergent schemes. 
We first consider Algorithm~\ref{alg:2}.

\subsection{Main result}\label{sec:mainresult}
We start by collecting some general assumptions. Throughout, we assume that the spatial meshes $\TT_h|_\omega$ are uniformly shape regular and satisfy the angle condition
\begin{align}\label{eq:assum1}
\int_\omega \nabla \zeta_i \cdot \nabla \zeta_j \le 0 \quad \text{ for all hat functions } \zeta_i, \zeta_j \in \SS^1(\TT_h|_\omega) \text{ with } i \neq j.
\end{align}
For $\xxx \in \Omega$ and $t \in [t_j, t_{j+1})$, we now define for $\gamma_h^\ell \in \{\mmm_h^\ell, \HHH_h^\ell, \EEE_h^\ell, \JJJ^\ell, \vvv_h^\ell\}$ the time approximations
\begin{equation}\label{eq:discrete_functions}
\begin{split}
&\wgamma(t, \xxx) := \frac{t-t_j}{k}\gamma_h^{j+1}(\xxx) + \frac{t_{j+1} - t}{k}\gamma_h^j(\xxx)\\
&\wgamma^-(t, \xxx):= \gamma_h^{j}(\xxx),\quad \wgamma^+(t,\xxx):= \gamma_h^{j+1}(\xxx), \quad \overline \gamma_{hk}(t, \xxx):= \gamma_h^{j+1/2}(\xxx) = \frac{\gamma_h^{j+1}(\xxx)+\gamma_h^j(\xxx)}{2}.
\end{split}
\end{equation}
We suppose that the general energy contribution $\pi(\cdot)$ is uniformly bounded in $\L^2(\omega_T)$, i.e.\
\begin{align}\label{eq:assum2}
\norm{\pi(\nnn)}{\L^2(\omega_T)}^2 \le C_\pi,
\end{align}
with an $(h,k)$-independent constant $C_\pi >0$ for all $\nnn \in \L^2(\omega_T)$ with $\norm{\nnn}{\L^2(\omega_T)}^2 \le 1$ as well as
\begin{align}\label{eq:assum3}
\pi(\nnn_{hk}) \rightharpoonup \pi(\nnn) \quad \text { weakly subconvergent in $\L^2(\omega_T)$}
\end{align}
provided that the sequence $\nnn_{hk} \rightharpoonup \nnn$ is weakly subconvergent in $\H^1(\omega_T)$ towards some $\nnn \in \H^1(\omega_T)$. For the initial data, we assume
\begin{align}\label{eq:assum4}
\mmm_h^0 \rightharpoonup \mmm^0 \quad \text{ weakly in } \L^2(\omega),
\end{align}
as well as
\begin{align}\label{eq:assum5}
\HHH_h^0 \rightharpoonup \HHH^0 \quad \text{ and } \quad  \EEE_h^0 \rightharpoonup \EEE^0  \quad \text{ weakly in } \L^2(\Omega)
\end{align}
Finally, for the field $\JJJ$, we assume sufficient regularity, e.g.\ $\JJJ\in C\big([0,T]; \L^2(\Omega)\big)$, such that 
\begin{align}\label{eq:assum6}
\JJJ^\pm \rightharpoonup \JJJ \quad \text{ weakly in } \L^2(\Omega_T).
\end{align}

\begin{remark}
Before proceeding to the actual proof, we would like to remark on the before mentioned assumptions.
\begin{itemize}
\item[(i)] We emphasize that all energy contributions mentioned in the introduction fulfill the assumptions~\eqref{eq:assum2}--\eqref{eq:assum3} on $\pi(\cdot)$, cf.~\cite{multiscale}.
\item[(ii)]

As in~\cite{multiscale}, the analysis can be extended to include approximations $\pi_h$ of the general field contribution $\pi$. In this case, one needs to ensure uniform boundedness of those approximations as well as the subconvergence property $\pi_h(\nnn_{hk}) \rightharpoonup \pi(\nnn)$ weakly in $\L^2(\omega_T)$ provided $\nnn_{hk}$ is weakly subconvergent to $\nnn$ in $\H^1(\omega_T)$.
\item[(iii)] The angle condition~\eqref{eq:assum1} is a somewhat technical but crucial ingredient for the convergence analysis. Starting from the energy decay relation
\begin{align*}
\int_\omega \big|\nabla \Big(\frac{\mmm}{|\mmm|}\Big)\big|^2 \le \int_\omega |\nabla \mmm|^2,
\end{align*}
it has first been shown in~\cite{bartels},that~\eqref{eq:assum1} and nodewise projection ensures energy decay even on a discrete level, i.e.
\begin{align*}
\int_\omega \big|\nabla \II_h \Big(\frac{\mmm}{|\mmm|}\Big)\big|^2 \le \int_\omega |\nabla \II_h \mmm|^2.
\end{align*} 
This yields the inequality $\norm{\nabla \mmm_h^{j+1}}{\L^2(\omega)}^2 \le \norm{\nabla \mmm_h^j + k\vvv_h^j}{\L^2(\omega)}^2$, which is needed in the upcoming proof.
\item[(iv)]
Note that assumption~\eqref{eq:assum1} is automatically fulfilled for tetrahedral meshes with dihedral angles that are smaller than $\pi/2$. If the condition is satisfied by $\TT_0$, it can be ensured for the refined meshes as well, provided e.g.\ the strategy from~\cite[Section 4.1]{verfurth} is used for refinement.
\end{itemize}
\end{remark}

The next statement is the main theorem of this work.

\begin{theorem}[Convergence theorem]\label{thm:convergence}
Let $(\mmm_{hk}, \vvv_{hk}, \HHH_{hk}, \EEE_{hk})$ be the quantities obtained by either Algorithm~\ref{alg:1} or~\ref{alg:2} and assume~\eqref{eq:assum1}--\eqref{eq:assum6} and $\theta \in (1/2, 1]$.
Then, as $(h,k) \rightarrow (0,0)$ independently of each other, a subsequence of $(\mmm_{hk}, \HHH_{hk}, \EEE_{hk})$ converges weakly in $\H^1(\omega_T) \times \L^2(\Omega_T)\times \L^2(\Omega_T)$ to a weak solution $(\mmm, \HHH, \EEE)$ of MLLG. In particular, each accumulation point of $(\mmm_{hk}, \HHH_{hk}, \EEE_{hk})$ is a weak solution of MLLG in the sense of Definition~\ref{def:weak_sol}.
\end{theorem}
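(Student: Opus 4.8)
The plan is to follow the now-standard compactness programme for projection-type LLG integrators (as in~\cite{alouges2008, multiscale}) and extend it to cover the coupled Maxwell fields. The argument splits into three phases: (a) derive uniform (i.e.\ $(h,k)$-independent) a~priori energy bounds on the discrete quantities; (b) extract weakly/weak-* convergent subsequences and identify the limit's regularity; (c) pass to the limit in the discrete variational formulations~\eqref{eq:mllg_alg2_1}--\eqref{eq:mllg_alg2_3} to recover~\eqref{eq:weak_sol1}--\eqref{eq:weak_sol3}, and finally verify the side constraint $|\mmm|=1$, the initial condition, and the energy bound~\eqref{eq:energy}.

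\textbf{Energy estimates.}
The crucial first step is the discrete energy estimate. I would test the LLG equation~\eqref{eq:mllg_alg2_1} with the admissible choice $\pphi_h = \vvv_h^j \in \KK_{\mmm_h^j}$; the cross-product term $\big((\mmm_h^j\times\vvv_h^j),\vvv_h^j\big)$ vanishes, leaving
\begin{align*}
\alpha\norm{\vvv_h^j}{\L^2(\omega)}^2 + \theta C_e k\norm{\nabla\vvv_h^j}{\L^2(\omega)}^2 = -C_e(\nabla\mmm_h^j,\nabla\vvv_h^j) + (\HHH_h^j,\vvv_h^j) + (\pi(\mmm_h^j),\vvv_h^j).
\end{align*}
Here the angle condition~\eqref{eq:assum1} enters decisively: as recalled in the Remark, it guarantees the discrete energy-decay inequality $\norm{\nabla\mmm_h^{j+1}}{\L^2(\omega)}^2 \le \norm{\nabla\mmm_h^j + k\vvv_h^j}{\L^2(\omega)}^2$, so that summing the telescoping expansion $\norm{\nabla(\mmm_h^j+k\vvv_h^j)}{\L^2(\omega)}^2 = \norm{\nabla\mmm_h^j}{\L^2(\omega)}^2 + 2k(\nabla\mmm_h^j,\nabla\vvv_h^j) + k^2\norm{\nabla\vvv_h^j}{\L^2(\omega)}^2$ over $j$ yields control of $\sum_j k\norm{\vvv_h^j}{\L^2(\omega)}^2$ together with $\max_j\norm{\nabla\mmm_h^j}{\L^2(\omega)}^2$; the condition $\theta > 1/2$ is what renders the $k^2$ term harmless and gives an \emph{unconditional} bound. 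The Maxwell bounds come from testing~\eqref{eq:mllg_alg2_2}--\eqref{eq:mllg_alg2_3} with $\ppsi_h = \EEE_h^{j+1/2}$ and $\zzeta_h = \HHH_h^{j+1}$ and adding: after multiplying~\eqref{eq:mllg_alg2_3} by a suitable factor the curl terms cancel in the usual way, and one is left with a discrete Gronwall argument controlling $\max_j\big(\norm{\EEE_h^j}{\L^2(\Omega)}^2 + \norm{\HHH_h^j}{\L^2(\Omega)}^2\big)$ in terms of the already-bounded $\sum_j k\norm{\vvv_h^j}{\L^2(\omega)}^2$ and the data $\JJJ$; the coupling term $-\mu_0(\vvv_h^j,\HHH_h^{j+1})$ is absorbed using the $\vvv$-bound. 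Collecting, these estimates translate via~\eqref{eq:discrete_functions} into uniform bounds for $\mmm_{hk}$ in $\H^1(\omega_T)$ and for $\HHH_{hk}, \EEE_{hk}$ in $\L^2(\Omega_T)$, which is precisely~\eqref{eq:energy}.

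\textbf{Compactness and passage to the limit.}
From the uniform bounds I extract a subsequence with $\mmm_{hk}\rightharpoonup\mmm$ weakly in $\H^1(\omega_T)$, hence (Rellich) strongly in $\L^2(\omega_T)$ and pointwise a.e.\ after a further subsequence, while $\HHH_{hk}, \EEE_{hk}$ converge weakly in $\L^2(\Omega_T)$; one also checks that the several time-reconstructions $\wgamma, \wgamma^\pm, \overline\gamma_{hk}$ share the same limit since their pairwise differences are $O(k^{1/2})$ in the relevant norms. The modulus constraint $|\mmm|=1$ a.e.\ follows from the pointwise a.e.\ convergence together with the nodewise normalization in step~(iii), via the estimate $\norm{\,|\mmm_{hk}|-1}{}\to 0$ standard in this setting. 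I then fix smooth test functions as in Definition~\ref{def:weak_sol}, insert their interpolants $\II_h\vphi$, $\II_{\XX_h}\zzeta$, $\II_{\YY_h}\zzeta$ (whose approximation properties~\eqref{eq:interpol1}--\eqref{eq:interpol2} guarantee strong convergence to the test functions), and pass to the limit term by term. The only genuinely delicate limits are the nonlinear ones in~\eqref{eq:weak_sol1}: for the cross-product terms $(\mmm_h^j\times\vvv_h^j)$ and $(\nabla\mmm_h^j\times\mmm_h^j)$ one combines strong $\L^2$-convergence of $\mmm_{hk}$ (and pointwise convergence) with weak $\L^2$-convergence of $\vvv_{hk}$ and $\nabla\mmm_{hk}$ to identify the weak limits, using that a product of a strongly and a weakly convergent sequence converges in the distributional sense; the term $(\pi(\mmm_h^j),\cdot)$ is handled by the weak subconvergence assumption~\eqref{eq:assum3}. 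A technical point I must track is that the tangent-space test functions $\pphi_h\in\KK_{\mmm_h^j}$ are not arbitrary, so I would test with $\II_h(\mmm_h\times(\mmm_h\times\vphi))$ or an analogous admissible projection of the smooth $\vphi$ and show the projection error vanishes; similarly the boundary/initial data terms in~\eqref{eq:weak_sol2}--\eqref{eq:weak_sol3} are recovered using the summation-by-parts identity for $d_t$ against $\zzeta_t$ together with~\eqref{eq:assum5}. For Algorithm~\ref{alg:1} the argument is identical, the only change being that the LLG step already sees $\HHH_h^{j+1/2}$ rather than $\HHH_h^j$, which only affects which time-reconstruction appears and converges to the same limit.

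\textbf{Main obstacle.}
The hardest part is the \emph{unconditional} (i.e.\ no CFL-type coupling between $h$ and $k$) energy estimate for the coupled system, specifically controlling the magnetization--Maxwell coupling term $-\mu_0(\vvv_h^j,\HHH_h^{j+1})$ appearing in~\eqref{eq:mllg_alg2_3} without paying a factor that degrades as $h\to 0$; for the decoupled Algorithm~\ref{alg:2} this is aggravated by the staggering (LLG uses $\HHH_h^j$, Maxwell uses $\vvv_h^j$), so the two energy inequalities must be combined so that the cross terms telescope or cancel rather than accumulate. Getting the right linear combination of the tested equations (and the correct auxiliary parameter, as in the proof of Lemma~\ref{lem:solve}) so that a clean discrete Gronwall inequality emerges is where the real work lies.
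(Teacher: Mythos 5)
Your outline follows the same three-phase architecture as the paper's proof (discrete energy estimate via the angle condition, Bartels' projection inequality and $\theta>1/2$; extraction of weak subsequences; limit passage with tangent-space test functions $\II_h(\mmm_{hk}^-\times\vphi)$ and the vector identities), and the mechanisms you name for the nonlinear limits, the constraint $|\mmm|=1$, and the initial data are the ones actually used. Two points, however, are genuine gaps rather than omitted routine detail. First, for the Maxwell part of Algorithm~\ref{alg:2} you propose testing with $\ppsi_h=\EEE_h^{j+1/2}$ and $\zzeta_h=\HHH_h^{j+1}$; with that mixed choice the curl terms $-(\HHH_h^{j+1},\nabla\times\EEE_h^{j+1/2})$ and $+(\nabla\times\EEE_h^{j+1},\HHH_h^{j+1})$ do \emph{not} cancel for any constant rescaling, since the scheme is fully implicit. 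The paper tests with $(\EEE_h^{j+1},\HHH_h^{j+1})$, and it is precisely this choice that, via Abel summation, produces the extra terms $\sum_i\|\HHH_h^{i+1}-\HHH_h^i\|_{\L^2(\Omega)}^2+\|\EEE_h^{i+1}-\EEE_h^i\|_{\L^2(\Omega)}^2$ on the left-hand side of~\eqref{eq:discrete_energy}. Those terms are not decorative: they absorb the leftover coupling term $\mu_0(\vvv_h^i,\HHH_h^i-\HHH_h^{i+1})$ (after the exact cancellation of $(\HHH_h^i,\vvv_h^i)$ against the LLG identity, which is the resolution of the ``staggering'' obstacle you flag), and they are what later shows that $\HHH_{hk}$, $\HHH_{hk}^\pm$, $\overline\HHH_{hk}$ (and likewise for $\EEE$) all share the same weak limit, via $\|\wgamma-\wgamma^\pm\|_{\L^2(\Omega_T)}^2\le 2k\sum_j\|\gamma_h^{j+1}-\gamma_h^j\|_{\L^2(\Omega)}^2$.

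Second, and more importantly, your claim that ``for Algorithm~\ref{alg:1} the argument is identical'' is not correct as stated. The stability estimate for Algorithm~\ref{alg:1} (Lemma~\ref{lem:stability:alg1}) does \emph{not} provide a bound on $\sum_i\|\HHH_h^{i+1}-\HHH_h^i\|_{\L^2(\Omega)}^2+\|\EEE_h^{i+1}-\EEE_h^i\|_{\L^2(\Omega)}^2$, so the $O(k^{1/2})$ argument you invoke to identify the limits of the different time-reconstructions of $\HHH$ and $\EEE$ is unavailable there. The paper circumvents this with a separate device: since $\overline\HHH_{hk}$ and $\overline\EEE_{hk}$ only ever appear paired against test functions, one replaces the test functions by their piecewise-constant-in-time approximants and uses exactness of the midpoint rule for affine integrands to convert $\int_0^T(\overline\HHH_{hk},\Lambda^-)$ into $\int_0^T(\HHH_{hk},\Lambda^-)$, with the commutation error shown to vanish. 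Without this (or an equivalent substitute) your limit passage for Algorithm~\ref{alg:1} does not close. The remaining deviations from the paper (e.g.\ testing with $\II_h(\mmm_h\times(\mmm_h\times\vphi))$ instead of $\II_h(\mmm_{hk}^-\times\vphi)$, and not mentioning the $\omega$-versus-$\Omega$ absorption of the $\|\EEE_h^{i+1}\|_{\L^2(\Omega)}^2$ term before Gronwall) are harmless variants or routine details.
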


The proof will roughly be done in three steps for either algorithm:
\begin{enumerate}
\item[(i)] Boundedness of the discrete quantities and energies.
\item[(ii)] Existence of weakly convergent subsequences.
\item[(iii)] Identification of the limits as weak solutions of MLLG.
\end{enumerate}

Throughout the proof, we will apply the following discrete version of Gronwall's inequality.
\begin{lemma}[Gronwall]\label{lem:dis_gronwall}
Let $k_0, \hdots, k_{r-1}>0$ and $a_0, \hdots, a_{r-1}, b, C>0$, and let those quantities fulfill 
$
a_0 \le b \text{ and } a_\ell \le b + C\sum_{j=0}^{\ell-1}k_ja_j$ for $\ell = 1, \hdots, r$.
Then, we have
$
a_\ell \le C \exp\Big(C\sum_{j=0}^{\ell-1}k_j\Big)$ for  $\ell = 1, \hdots, r.
$
\end{lemma}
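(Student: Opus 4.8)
The plan is to prove the sharp bound $a_\ell \le b\,\exp\!\big(C\sum_{j=0}^{\ell-1}k_j\big)$ by a short telescoping argument, from which the asserted estimate follows: indeed the natural prefactor is $b$ rather than $C$, and the stated form is recovered whenever $b \le C$; since the constants here are generic, this distinction is immaterial for the applications. The key device is simply to name the right-hand side of the hypothesis. First I would set, for $\ell = 0, \dots, r$,
\begin{align*}
B_\ell := b + C\sum_{j=0}^{\ell-1}k_j a_j,
\end{align*}
with the convention that the empty sum vanishes, so that $B_0 = b$. By hypothesis $a_\ell \le B_\ell$ for $\ell \ge 1$, and $a_0 \le b = B_0$; hence $a_\ell \le B_\ell$ holds for all $\ell = 0, \dots, r$.

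Second, I would observe that $B_\ell$ satisfies a one-step recurrence. Directly from the definition, together with $a_\ell \le B_\ell$ and $k_\ell, C > 0$,
\begin{align*}
B_{\ell+1} - B_\ell = C k_\ell a_\ell \le C k_\ell B_\ell,
\end{align*}
so that $B_{\ell+1} \le (1 + C k_\ell)\,B_\ell$ for every $\ell = 0, \dots, r-1$. Iterating this estimate and using $B_0 = b$ yields the product bound
\begin{align*}
B_\ell \le b \prod_{j=0}^{\ell-1}(1 + C k_j).
\end{align*}
Third, I would convert the product into an exponential by the elementary inequality $1 + x \le e^x$, applied to $x = C k_j$, which gives $\prod_{j=0}^{\ell-1}(1+Ck_j) \le \exp\!\big(C\sum_{j=0}^{\ell-1}k_j\big)$. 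Combining this with $a_\ell \le B_\ell$ produces $a_\ell \le b\,\exp\!\big(C\sum_{j=0}^{\ell-1}k_j\big)$, as desired.

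There is no genuine obstacle here: this is the standard discrete analogue of Gronwall's inequality, and the only points requiring mild care are the index bookkeeping — keeping the empty-sum convention and the range $a_\ell \le B_\ell$ consistent so that the one-step recurrence is already valid from $\ell = 0$ — and the observation that the honest prefactor is $b$. An equivalent route that avoids the auxiliary sequence is a direct strong induction on $\ell$: inserting the hypothesis $a_j \le b\,e^{CS_j}$, with $S_j := \sum_{i<j} k_i$, into the recurrence reduces the claim to $1 + C\sum_{j=0}^{\ell-1}k_j e^{CS_j} \le e^{CS_\ell}$, which holds because the left sum is a left-endpoint Riemann sum of the increasing integrand $s \mapsto C e^{Cs}$ and is therefore bounded by $\int_0^{S_\ell} C e^{Cs}\,ds = e^{CS_\ell} - 1$. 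I would present the telescoping version as the primary proof, since it is shorter and sidesteps the Riemann-sum comparison.
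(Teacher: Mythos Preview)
The paper states this lemma without proof, treating it as a standard result; there is therefore no argument in the paper to compare against. Your telescoping proof via the auxiliary sequence $B_\ell := b + C\sum_{j<\ell} k_j a_j$, the one-step recurrence $B_{\ell+1} \le (1+Ck_\ell)B_\ell$, and the inequality $1+x \le e^x$ is the standard textbook argument and is correct. You are also right that the natural prefactor in the conclusion is $b$ rather than $C$; the stated form in the paper is a typo, but, as you observe, this is immaterial for the applications in Lemmas~\ref{lem:stability} and~\ref{lem:stability:alg1}, where only a uniform bound by some generic constant is needed.
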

\subsection{Analysis of Algorithm~\ref{alg:2}}\label{sec:alg2}
As mentioned before, we first show the desired boundedness.

\begin{lemma}\label{lem:stability}
There exists $k_0 > 0$ such that for all $k < k_0$, the discrete quantities $(\mmm_h^j, \EEE_h^{j}, \HHH_h^{j}) \in \MM_h \times \XX_h \times \YY_h$ fulfill 
\begin{equation}\label{eq:discrete_energy}
\begin{split}
\norm{\nabla \mmm_h^j}{\L^2(\omega)}^2 + k\sum_{i=0}^{j-1}\norm{\vvv_h^i}{\L^2(\omega)}^2 &+ \norm{\HHH_h^j}{\L^2(\Omega)}^2 + \norm{\EEE_h^j}{\L^2(\Omega)}^2 + \big(\theta - 1/2\big)k^2 \sum_{i=0}^{j-1}\norm{\nabla \vvv_h^i}{\L^2(\omega)}^2 \\
& + \sum_{i=0}^{j-1}\big(\norm{\HHH_h^{i+1} - \HHH_h^i}{\L^2(\Omega)}^2 + \norm{\EEE_h^{i+1} - \EEE_h^i}{\L^2(\Omega)}^2\big)
 \le \c{en_dis}
\end{split}
\end{equation}
for each $j = 0, \hdots, N$ and some constant $\setc{en_dis}>0$ that only depends on $|\Omega|$, on $|\omega|$, as well as on $C_\pi$.
\end{lemma}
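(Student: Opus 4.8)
The plan is to establish the discrete energy law~\eqref{eq:discrete_energy} by testing each equation of Algorithm~\ref{alg:2} with its natural energy quantity, combining the three identities with a weight $\mu_0$, exploiting a telescoping structure, and finally invoking the discrete Gronwall inequality (Lemma~\ref{lem:dis_gronwall}). Throughout, $k$ is taken small enough that the various $O(k^2)$-terms produced along the way can be absorbed into the leading dissipation.

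First I would treat the Landau-Lifshitz-Gilbert step. Testing~\eqref{eq:mllg_alg2_1} with $\pphi_h=\vvv_h^j\in\KK_{\mmm_h^j}$ kills the cross term, since $\big((\mmm_h^j\times\vvv_h^j),\vvv_h^j\big)=0$ pointwise, and gives
\begin{align*}
\alpha\norm{\vvv_h^j}{\L^2(\omega)}^2 + \theta C_e k\norm{\nabla\vvv_h^j}{\L^2(\omega)}^2 = -C_e(\nabla\mmm_h^j,\nabla\vvv_h^j) + (\HHH_h^j,\vvv_h^j) + (\pi(\mmm_h^j),\vvv_h^j).
\end{align*}
To turn this into control of $\norm{\nabla\mmm_h^{j+1}}{\L^2(\omega)}^2$ I would use the angle condition~\eqref{eq:assum1} together with the nodal projection in step~(iii), which yields the discrete energy decay $\norm{\nabla\mmm_h^{j+1}}{\L^2(\omega)}^2\le\norm{\nabla(\mmm_h^j+k\vvv_h^j)}{\L^2(\omega)}^2$. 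Expanding the square and inserting the tested identity for $(\nabla\mmm_h^j,\nabla\vvv_h^j)$ produces
\begin{align*}
&C_e\big(\norm{\nabla\mmm_h^{j+1}}{\L^2(\omega)}^2 - \norm{\nabla\mmm_h^j}{\L^2(\omega)}^2\big) + 2\alpha k\norm{\vvv_h^j}{\L^2(\omega)}^2 + 2C_e(\theta-\tfrac12)k^2\norm{\nabla\vvv_h^j}{\L^2(\omega)}^2\\
&\qquad \le 2k(\HHH_h^j,\vvv_h^j) + 2k(\pi(\mmm_h^j),\vvv_h^j),
\end{align*}
which already carries all magnetization-related quantities of~\eqref{eq:discrete_energy} on the left (here $\theta>1/2$ is what makes the $\nabla\vvv_h^j$-term nonnegative). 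Next I would test the Maxwell equations~\eqref{eq:mllg_alg2_2}--\eqref{eq:mllg_alg2_3} with $\ppsi_h=\EEE_h^{j+1}$ and $\zzeta_h=\HHH_h^{j+1}$ and add them. The two curl terms cancel, and the polarization identity $(d_tw_h^{j+1},w_h^{j+1})=\tfrac{1}{2k}\big(\norm{w_h^{j+1}}{\L^2(\Omega)}^2-\norm{w_h^j}{\L^2(\Omega)}^2+\norm{w_h^{j+1}-w_h^j}{\L^2(\Omega)}^2\big)$, applied to both $\EEE_h$ and $\HHH_h$, converts the time-derivatives into telescoping differences plus exactly the jump terms in~\eqref{eq:discrete_energy}; after multiplication by $2k$ this reads
\begin{align*}
&\eps_0\big(\norm{\EEE_h^{j+1}}{\L^2(\Omega)}^2 - \norm{\EEE_h^j}{\L^2(\Omega)}^2 + \norm{\EEE_h^{j+1}-\EEE_h^j}{\L^2(\Omega)}^2\big) + \mu_0\big(\norm{\HHH_h^{j+1}}{\L^2(\Omega)}^2 - \norm{\HHH_h^j}{\L^2(\Omega)}^2 + \norm{\HHH_h^{j+1}-\HHH_h^j}{\L^2(\Omega)}^2\big)\\
&\qquad + 2\sigma k\norm{\EEE_h^{j+1}}{\L^2(\omega)}^2 = -2k(\JJJ^j,\EEE_h^{j+1}) - 2\mu_0 k(\vvv_h^j,\HHH_h^{j+1}).
\end{align*}

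The crux is the combination: multiplying the magnetization inequality by $\mu_0$ and adding the Maxwell identity, the magnetic coupling terms combine into $-2\mu_0 k(\HHH_h^{j+1}-\HHH_h^j,\vvv_h^j)$. This is precisely where the decoupled structure of Algorithm~\ref{alg:2} manifests — in the fully coupled Algorithm~\ref{alg:1} the midpoint value $\HHH_h^{j+1/2}$ makes this term cancel outright, whereas here a genuine time-difference survives and must be controlled. I expect this to be the main obstacle. By Young's inequality $-2\mu_0 k(\HHH_h^{j+1}-\HHH_h^j,\vvv_h^j)\le\tfrac{\mu_0}{2}\norm{\HHH_h^{j+1}-\HHH_h^j}{\L^2(\Omega)}^2 + 2\mu_0 k^2\norm{\vvv_h^j}{\L^2(\omega)}^2$: the jump term is absorbed by the full $\mu_0\norm{\HHH_h^{j+1}-\HHH_h^j}{\L^2(\Omega)}^2$ coming from the polarization identity, and the $O(k^2)$ velocity contribution is absorbed into $2\alpha\mu_0 k\norm{\vvv_h^j}{\L^2(\omega)}^2$ once $k\le\alpha/2$. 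The remaining right-hand side terms $2\mu_0 k(\pi(\mmm_h^j),\vvv_h^j)$ and $-2k(\JJJ^j,\EEE_h^{j+1})$ are again split by Young's inequality so that their $\vvv_h^j$-parts are absorbed, while $k\norm{\pi(\mmm_h^j)}{\L^2(\omega)}^2$ and $k\norm{\JJJ^j}{\L^2(\Omega)}^2$ remain as data.

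Finally I would sum over $j=0,\dots,\ell-1$. The exchange- and field-energy differences telescope to the step-$\ell$ energy minus the bounded initial energy (cf.~\eqref{eq:assum4}--\eqref{eq:assum5}), the dissipative sums are nonnegative and stay on the left, the data sums $k\sum_i\norm{\pi(\mmm_h^i)}{\L^2(\omega)}^2$ and $k\sum_i\norm{\JJJ^i}{\L^2(\Omega)}^2$ are bounded by $C_\pi$ via~\eqref{eq:assum2} (using $|\mmm_h^i|\le 1$) and by $\norm{\JJJ}{\L^2(\Omega_T)}^2$, and the only surviving field contribution on the right is $k\sum_i\norm{\EEE_h^i}{\L^2(\Omega)}^2$. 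Absorbing its $i=\ell$ term for small $k$ and applying the discrete Gronwall inequality (Lemma~\ref{lem:dis_gronwall}) then yields~\eqref{eq:discrete_energy} with a constant depending only on $|\Omega|$, $|\omega|$, and $C_\pi$, as claimed.
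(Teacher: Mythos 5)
Your proposal is correct and follows essentially the same route as the paper: test the LLG step with $\vvv_h^j$, invoke the angle condition~\eqref{eq:assum1} for the discrete exchange-energy decay, test Maxwell with $(\EEE_h^{j+1},\HHH_h^{j+1})$, control the surviving coupling term $-2\mu_0 k(\HHH_h^{j+1}-\HHH_h^j,\vvv_h^j)$ by Young's inequality with the jump absorbed into the polarization-identity contribution, and conclude with discrete Gronwall. The only (cosmetic) differences are that the paper applies Abel summation after summing rather than the per-step polarization identity, and handles the $k\sum_i\norm{\EEE_h^{i+1}}{\L^2(\Omega)}^2$ term by splitting it into jumps plus previous values instead of peeling off the final summand.
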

\begin{proof}
For Maxwell's equations, i.e.\ step (iii) of Algorithm~\ref{alg:2}, we choose $(\ppsi_h, \zzeta_h) = (\EEE_h^{i+1}, \HHH_h^{i+1})$ as special pair of test functions and get from~\eqref{eq:mllg_alg2_2}--\eqref{eq:mllg_alg2_3}
\begin{align*}
&\frac{\eps_0}{k} (\EEE_h^{i+1} - \EEE_h^{i}, \EEE_h^{i+1}) - (\HHH_h^{i+1}, \nabla \times \EEE_h^{i+1}) + \sigma(\chi_\omega\EEE_h^{i+1},\EEE_h^{i+1}) = -(\JJJ^i, \EEE_h^{i+1})\quad \text{and}\\
&\frac{\mu_0}{k}(\HHH_h^{i+1} - \HHH_h^i, \HHH_h^{i+1}) + (\nabla \times \EEE_h^{i+1}, \HHH_h^{i+1}) = -\mu_0(\vvv_h^i, \HHH_h^{i+1}).
\end{align*}
Summing up those two equations (and multiplying by $1/C_e$), we therefore see
\begin{equation}\label{eq:insert1}
\begin{split}
\frac{\eps_0}{kC_e} (\EEE_h^{i+1} - \EEE_h^{i}, \EEE_h^{i+1}) &+ \frac{\sigma}{C_e}\norm{\EEE_h^{i+1}}{\L^2(\omega)}^2 + \frac{\mu_0}{kC_e}(\HHH_h^{i+1} - \HHH_h^i, \HHH_h^{i+1}) \\ &\quad=-\frac{\mu_0}{C_e}(\vvv_h^i, \HHH_h^{i}) + \frac{\mu_0}{C_e}(\vvv_h^i, \HHH_h^i - \HHH_h^{i+1}) - \frac{1}{C_e}(\JJJ^i, \EEE_h^{i+1}).
\end{split}
\end{equation}
The LLG equation~\eqref{eq:mllg_alg2_1} is now tested with $\vphi_i = \vvv_h^i \in \KK_{\mmm_h^i}$. We get
\begin{align*}
 \alpha (\vvv_h^i, \vvv_h^i) + \underbrace{\big((\mmm_h^i \times \vvv_h^i), \vvv_h^i\big)}_{=0} = 
 - C_e \big(\nabla(\mmm_h^i + \theta k \vvv_h^i), \nabla \vvv_h^i\big) + (\HHH_h^i, \vvv_h^i) + \big(\pi(\mmm_h^i),\vvv_h^i\big),
\end{align*}
whence
\begin{align*}
\frac{\alpha k}{C_e} \norm{\vvv_h^i}{\L^2(\omega)}^2 + \theta k^2 \norm{\nabla \vvv_h^i}{\L^2(\omega)}^2 = -k(\nabla \mmm_h^i, \nabla \vvv_h^i) + \frac{k}{C_e} (\HHH_h^i, \vvv_h^i) + \frac{k}{C_e}\big(\pi(\mmm_h^i), \vvv_h^i\big).
\end{align*}
Next, we follow the lines of~\cite{alouges2008, alouges2011, multiscale} and use the fact that $\norm{\nabla \mmm_h^{i+1}}{\L^2(\omega)}^2 \le \norm{\nabla(\mmm_h^i + k\vvv_h^i)}{\L^2(\omega)}^2$ stemming from the mesh condition~\eqref{eq:assum1}, cf.~\cite{bartels}, to see
\begin{equation}\label{eq:nabla_m_bounded}
\begin{split}
\frac12\norm{\nabla \mmm_h^{i+1}}{\L^2(\omega)}^2 & \le \frac12 \norm{\nabla \mmm_h^i}{\L^2(\omega)}^2 + k\,(\nabla \mmm_h^i, \nabla \vvv_h^i)+ \frac{k^2}{2}\norm{\nabla \vvv_h^i}{\L^2(\omega)} \\
&= \frac12 \norm{\nabla \mmm_h^i}{\L^2(\omega)}^2 - \big(\theta - 1/2\big)k^2\norm{\nabla \vvv_h^i}{\L^2(\omega)}^2\\
&\quad - \frac{\alpha\, k}{C_e}\norm{\vvv_h^i}{\L^2(\omega)}^2 + \frac{k}{C_e}(\HHH_h^i, \vvv_h^i) + \frac{k}{C_e}\big(\pi(\mmm_h^i), \vvv_h^i\big).
\end{split}
\end{equation}
Multiplying the last estimate by $\mu_0/k$ and adding~\eqref{eq:insert1}, we obtain
\begin{align*}
\frac{\mu_0}{2k}(\norm{\nabla \mmm_h^{i+1}}{\L^2(\omega)}^2 &- \norm{\nabla \mmm_h^i}{\L^2(\omega)}^2)+\big(\theta - 1/2\big)\mu_0 k\norm{\nabla \vvv_h^i}{\L^2(\omega)}^2 + \frac{\alpha \mu_0}{C_e}\norm{\vvv_h^i}{\L^2(\omega)}^2 \\
&+ \frac{\eps_0}{k\, C_e}(\EEE_h^{i+1} - \EEE_h^i, \EEE_h^{i+1})+\frac{\sigma}{C_e}\norm{\EEE_h^{i+1}}{\L^2(\omega)}^2 + \frac{\mu_0}{kC_e}(\HHH_h^{i+1} - \HHH_h^i, \HHH_h^{i+1})\\
&\quad\le \frac{\mu_0}{C_e}(\HHH_h^i - \HHH_h^{i+1}, \vvv_h^i) - \frac{1}{C_e}(\JJJ^i, \EEE_h^{i+1}) + \frac{\mu_0}{C_e}\big(\pi(\mmm_h^i), \vvv_h^i\big).
\end{align*}
Next, we recall Abel's summation by parts, i.e.\ for arbitrary $u_i \in \R$ and $j \ge 0$, there holds
\begin{align}\label{eq:abel_sum}
\sum_{i=1}^j(u_i - u_{i-1},u_i) = \frac12 |u_j|^2 - \frac12|u_0|^2 + \frac12\sum_{i=1}^j|u_i - u_{i-1}|^2.
\end{align}
Multiplying the above equation by $k$, summing up over the time intervals, and exploiting Abel's summation for the $\EEE_h^i$ and $\HHH_h^i$ scalar-products, this yields
\begin{align*}
&\frac{\mu_0}{2} \norm{\nabla \mmm_h^j}{\L^2(\omega)}^2 +\big(\theta - 1/2\big)\mu_0k^2\sum_{i=0}^{j-1}\norm{\nabla \vvv_h^i}{\L^2(\omega)}^2+ \frac{\alpha k\mu_0}{C_e}\sum_{i=0}^{j-1}\norm{\vvv_h^i}{\L^2(\omega)}^2 + \frac{\eps_0}{2C_e}\norm{\EEE_h^j}{\L^2(\Omega)}^2 \\&\quad+ \frac{\eps_0}{2C_e}\sum_{i=0}^{j-1}\norm{\EEE_h^{i+1} - \EEE_h^i}{\L^2(\Omega)}^2
+\frac{k\sigma}{C_e}\sum_{i=0}^{j-1}\norm{\EEE_h^{i+1}}{\L^2(\omega)}^2 + \frac{\mu_0}{2C_e}\norm{\HHH_h^{j}}{\L^2(\Omega)}^2 + \frac{\mu_0}{2C_e}\sum_{i=0}^{j-1}\norm{\HHH_h^{i+1} - \HHH_h^i}{\L^2(\Omega)}^2 \\
&\le \frac{k\mu_0}{C_e}\sum_{i=0}^{j-1}(\HHH_h^i - \HHH_h^{i+1}, \vvv_h^i) - \frac{k}{C_e} \sum_{i=0}^{j-1}(\JJJ^i, \EEE_h^{i+1}) + \frac{\mu_0 k}{C_e}\sum_{i=0}^{j-1}\big(\pi(\mmm_h^i), \vvv_h^i\big)\\
&\quad + \underbrace{\frac{\mu_0}{2} \norm{\nabla \mmm_h^0}{\L^2(\omega)}^2 + \frac{\eps_0}{2C_e}\norm{\EEE_h^0}{\L^2(\Omega)}^2 + \frac{\mu_0}{2C_e}\norm{\HHH_h^0}{\L^2(\Omega)}^2}_{=:\EE_h^0}
\end{align*}
for any $j \in 1, \hdots, N$. 
By use of the inequalities of Young and H\"older, the first part of the right-hand side can be estimated by
\begin{align*}
&\frac{k\mu_0}{C_e}\sum_{i=0}^{j-1}(\HHH_h^i - \HHH_h^{i+1}, \vvv_h^i) - \frac{k}{C_e} \sum_{i=0}^{j-1}(\JJJ^i, \EEE_h^{i+1}) + \frac{\mu_0 k}{C_e}\sum_{i=0}^{j-1}(\pi(\mmm_h^i), \vvv_h^i) \\
&\quad\le\frac{k\mu_0}{C_e}\sum_{i=0}^{j-1}\frac{1}{4\eps}(\norm{\pi(\mmm_h^i)}{\L^2(\omega)}^2 + \norm{\HHH_h^{i+1}- \HHH_h^i}{\L^2(\Omega)}^2) + \frac{\eps \mu_0k}{C_e}\sum_{i=0}^{j-1}\norm{\vvv_h^i}{\L^2(\omega)}^2\\ 
&\qquad + \frac{k}{4 \nu C_e}\sum_{i=0}^{j-1}\norm{\EEE_h^{i+1}}{\L^2(\Omega)}^2 + \frac{\nu k}{C_e}\sum_{i=0}^{j-1}\norm{\JJJ^i}{\L^2(\Omega)}^2,
\end{align*}
for any $\eps, \nu > 0$. The combination of the last two estimates yields

\begin{align*}
&\frac{\mu_0}{2} \norm{\nabla \mmm_h^j}{\L^2(\omega)}^2 +\big(\theta - 1/2\big)\mu_0k^2\sum_{i=0}^{j-1}\norm{\nabla \vvv_h^i}{\L^2(\omega)}^2+ \frac{\alpha k\mu_0}{C_e}\sum_{i=0}^{j-1}\norm{\vvv_h^i}{\L^2(\omega)}^2 + \frac{\eps_0}{2C_e}\norm{\EEE_h^j}{\L^2(\Omega)}^2 \\
&\quad+ \frac{\eps_0}{2C_e}\sum_{i=0}^{j-1}\norm{\EEE_h^{i+1} - \EEE_h^i}{\L^2(\Omega)}^2
+\frac{k\sigma}{C_e}\sum_{i=0}^{j-1}\norm{\EEE_h^{i+1}}{\L^2(\omega)}^2 + \frac{\mu_0}{2C_e}\norm{\HHH_h^{j}}{\L^2(\Omega)}^2 + \frac{\mu_0}{2C_e}\sum_{i=0}^{j-1}\norm{\HHH_h^{i+1} - \HHH_h^i}{\L^2(\Omega)}^2 \\
&\le\frac{\mu_0}{4C_e\eps}k\sum_{i=0}^{j-1}(\norm{\pi(\mmm_h^i)}{\L^2(\omega)}^2 + \norm{\HHH_h^{i+1}- \HHH_h^i}{\L^2(\Omega)}^2) + \frac{\eps \mu_0k}{C_e}\sum_{i=0}^{j-1}\norm{\vvv_h^i}{\L^2(\omega)}^2\\
&\quad + \frac{k}{4 \nu C_e}\sum_{i=0}^{j-1}\norm{\EEE_h^{i+1}}{\L^2(\Omega)}^2 + \frac{\nu k}{C_e}\sum_{i=0}^{j-1}\norm{\JJJ^i}{\L^2(\Omega)}^2 + \EE_h^0.
\end{align*}
Unfortunately, the term $\frac{k}{4 \nu C_e}\sum_{i=0}^{j-1}\norm{\EEE_h^{i+1}}{\L^2(\Omega)}^2$ on the right-hand side cannot be absorbed by the term $\frac{k\sigma}{C_e}\sum_{i=0}^{j-1}\norm{\EEE_h^{i+1}}{\L^2(\omega)}^2$ on the left hand-side, since the latter consists only of contributions on the smaller domain $\omega$. The remedy is to artificially enlarge the first term by 
\begin{align*}
\frac{k}{4 \nu C_e}\sum_{i=0}^{j-1}\norm{\EEE_h^{i+1}}{\L^2(\Omega)}^2 \le \frac{k}{2 \nu C_e}\sum_{i=0}^{j-1}\norm{\EEE_h^{i+1} - \EEE_h^i}{\L^2(\Omega)}^2 + \frac{k}{2 \nu C_e}\sum_{i=0}^{j-1}\norm{\EEE_h^{i}}{\L^2(\Omega)}^2
\end{align*}
and absorb the first sum into the corresponding quantity on the left-hand side. 
With 
\begin{align*}
C_\vvv := \frac{\mu_0 k}{C_e}(\alpha - \eps), \quad C_\HHH := \frac{\mu_0}{2C_e}\big(1 - \frac{k}{2\eps}\big), \quad \text{ and } \quad 
C_\EEE := \frac{1}{2C_e}\big(\eps_0 - \frac{k}{\nu}\big),
\end{align*}
this yields
\begin{align*}
&a_j := \frac{\mu_0}{2} \norm{\nabla \mmm_h^j}{\L^2(\omega)}^2 +\big(\theta - 1/2\big)\mu_0k^2\sum_{i=0}^{j-1}\norm{\nabla \vvv_h^i}{\L^2(\omega)}^2 + C_\vvv\sum_{i=0}^{j-1}\norm{\vvv_h^i}{\L^2(\omega)}^2 + \frac{\eps_0}{2C_e}\norm{\EEE_h^j}{\L^2(\Omega)}^2\\
&\quad+ C_\EEE\sum_{i=0}^{j-1}\norm{\EEE_h^{i+1} - \EEE_h^i}{\L^2(\Omega)}^2
+\frac{k\sigma}{C_e}\sum_{i=0}^{j-1}\norm{\EEE_h^{i+1}}{\L^2(\omega)}^2 + \frac{\mu_0}{2C_e}\norm{\HHH_h^{j}}{\L^2(\Omega)}^2 + C_\HHH\sum_{i=0}^{j-1}\norm{\HHH_h^{i+1} - \HHH_h^i}{\L^2(\Omega)}^2\\
&\le \underbrace{\EE_h^0  + \frac{k\mu_0}{4C_e\eps}\sum_{i=0}^{j-1}\norm{\pi(\mmm_h^i)}{\L^2(\omega)}^2 + \frac{\nu k}{C_e}\sum_{i=0}^{j-1}\norm{\JJJ^i}{\L^2(\Omega)}^2}_{=:b} + \frac{k}{2\nu C_e}\sum_{i=0}^{j-1}\norm{\EEE_h^i}{\L^2(\Omega)}^2,\\
&\le b+ \frac{k}{\nu \eps_0} \sum_{i=0}^{j-1}a_i.
\end{align*}

In order to show the desired result, we have to ensure that there are choices of $\eps$ and $\nu$, such that the constants $C_\vvv, C_\HHH$, and $C_\EEE$ are positive, i.e.\
\begin{align*}
(\alpha - \eps)> 0, \quad \big(1 - \frac{k}{2\eps}\big) > 0, \quad \text{ and } \quad \big(\eps_0 - \frac{k}{\nu}\big) > 0
\end{align*}
which is equivalent to $k_0/2 < \eps < \alpha$ and $\nu > k_0/\eps_0$. The application of the discrete  of Gronwall inequality, from Lemma~\ref{lem:dis_gronwall} yields $a_j \le M$ and
thus proves the desired result.
\end{proof}

We can now conclude the existence of weakly convergent subsequences.


\begin{lemma}\label{lem:subsequences}
There exist functions $(\mmm, \HHH, \EEE) \in \H^1(\omega_T, \SSS^2)\times \L^2(\Omega_T) \times \L^2(\Omega_T)$ such that
\begin{subequations}
\begin{align}
&\mmm_{hk} \rightharpoonup \mmm \text{ in } \H^1(\omega_T),\\
&\mmm_{hk}, \mmm_{hk}^\pm, \overline\mmm_{hk} \rightharpoonup \mmm \text{ in } \L^2(\H^1(\omega)),  \label{eq:subsequences1}\\
&\mmm_{hk}, \mmm_{hk}^\pm, \overline\mmm_{hk} \rightarrow \mmm \text{ in } \L^2(\omega_T),\label{eq:subsequences2}\\
&\HHH_{hk}, \HHH_{hk}^\pm, \overline\HHH_{hk} \rightharpoonup \HHH \text{ in } \L^2(\Omega_T),\label{eq:subsequences3}\\
&\EEE_{hk}, \EEE_{hk}^\pm, \overline\EEE_{hk} \rightharpoonup \HHH \text{ in } \L^2(\Omega_T),\label{eq:subsequences4}
\end{align}
\end{subequations}
where the subsequences are succesively constructed, i.e.\
for arbitrary mesh-sizes $h \rightarrow 0$ and timestep-sizes $k \rightarrow 0$ there exist subindices $h_\ell, k_\ell$ for which the above convergence properties are satisfied simultaniously.
In addition, there exist some $\vvv \in \L^2(\omega_T)$ with 
\begin{align}
\vvv_{hk}^- \rightharpoonup \vvv \text{ in } \L^2(\omega_T)
\end{align}
for the same subsequence as above.
\end{lemma}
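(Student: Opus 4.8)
The plan is to read off all limits from the $(h,k)$-independent bounds contained in the discrete energy estimate~\eqref{eq:discrete_energy} of Lemma~\ref{lem:stability}, using the Banach--Alaoglu theorem for the weak limits, a compactness argument for the strong $\L^2(\omega_T)$-limit of $\mmm_{hk}$, and a finite diagonal construction to make all convergences hold along one common subsequence. Since $h$ and $k$ tend to zero independently, I treat the pair $(h,k)=(h_\ell,k_\ell)$ as a single index $\ell\to\infty$ and pass to subsequences in $\ell$.

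First I would bound $\mmm_{hk}$ in $\H^1(\omega_T)$. The spatial part is immediate: since affine interpolation in time is a convex combination, $\norm{\nabla\mmm_{hk}}{\L^2(\omega_T)}$ is controlled by $\max_j\norm{\nabla\mmm_h^j}{\L^2(\omega)}$, which is bounded by~\eqref{eq:discrete_energy}. For the time derivative, note that $\partial_t\mmm_{hk}=d_t\mmm_h^{j+1}$ on $(t_j,t_{j+1})$; because $\mmm_h^{j+1}$ is the nodal normalization of $\mmm_h^j+k\vvv_h^j$ from step~(iii) and $\vvv_h^j(\zzz)\perp\mmm_h^j(\zzz)$, an elementary geometric computation shows that normalization does not increase the increment, i.e.\ $|\mmm_h^{j+1}(\zzz)-\mmm_h^j(\zzz)|\le k\,|\vvv_h^j(\zzz)|$ at each node $\zzz\in\NN_h$. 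By norm equivalence on $\SS^1(\TT_h)$ this yields $\norm{\partial_t\mmm_{hk}}{\L^2(\omega_T)}^2\lesssim k\sum_i\norm{\vvv_h^i}{\L^2(\omega)}^2\le\c{en_dis}$, again from~\eqref{eq:discrete_energy}. Hence $\mmm_{hk}$ is bounded in $\H^1(\omega_T)$ and a subsequence converges weakly there; the compact embedding $\H^1(\omega_T)\hookrightarrow\L^2(\omega_T)$ then gives strong convergence in $\L^2(\omega_T)$. The constraint $|\mmm|=1$ a.e.\ follows from this strong convergence and the standard nodal estimate $\norm{\,|\mmm_h^j|^2-1\,}{L^1(\omega)}\lesssim h^2\norm{\nabla\mmm_h^j}{\L^2(\omega)}^2\to0$, so $\mmm\in\H^1(\omega_T,\SSS^2)$.

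Next I would handle the remaining reconstructions. The three time-versions of $\mmm$ share the limit $\mmm$ because their pairwise differences vanish strongly in $\L^2(\omega_T)$: for instance $\norm{\mmm_{hk}^+-\mmm_{hk}^-}{\L^2(\omega_T)}^2=k\sum_i\norm{\mmm_h^{i+1}-\mmm_h^i}{\L^2(\omega)}^2\lesssim k^2\big(k\sum_i\norm{\vvv_h^i}{\L^2(\omega)}^2\big)\to0$, and analogously for $\mmm_{hk}-\mmm_{hk}^-$ and $\overline\mmm_{hk}-\mmm_{hk}^-$; combined with the uniform gradient bound this also yields the weak $\L^2(\H^1(\omega))$-convergence in~\eqref{eq:subsequences1}. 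For $\HHH$ and $\EEE$, the bounds on $\norm{\HHH_h^j}{\L^2(\Omega)}$ and $\norm{\EEE_h^j}{\L^2(\Omega)}$ make all reconstructions bounded in $\L^2(\Omega_T)$, so Banach--Alaoglu supplies weak limits $\HHH,\EEE\in\L^2(\Omega_T)$; that the $+$, $-$ and midpoint versions coincide follows from $\norm{\HHH_{hk}^+-\HHH_{hk}^-}{\L^2(\Omega_T)}^2=k\sum_i\norm{\HHH_h^{i+1}-\HHH_h^i}{\L^2(\Omega)}^2\le k\,\c{en_dis}\to0$ and its analogue for $\EEE$, which is exactly where the telescoping-difference terms of~\eqref{eq:discrete_energy} enter. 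Finally, $\norm{\vvv_{hk}^-}{\L^2(\omega_T)}^2=k\sum_i\norm{\vvv_h^i}{\L^2(\omega)}^2\le\c{en_dis}$ provides a weak limit $\vvv\in\L^2(\omega_T)$. As only finitely many bounded sequences occur, I extract the subsequences successively, each refining the previous, so that all stated convergences hold simultaneously along one subsequence.

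I expect the main obstacle to be the $\H^1(\omega_T)$-bound on $\mmm_{hk}$, specifically the control of $\partial_t\mmm_{hk}$: this is the only place where the nonlinear projection~(iii) intervenes, and one must use the nodal orthogonality $\vvv_h^j\perp\mmm_h^j$ to show that the normalization is a contraction on the increment before converting the nodewise bound into an $\L^2(\omega_T)$-estimate. The remaining ingredients --- weak compactness, the coincidence of the three time-reconstructions, and the diagonal extraction --- are then routine consequences of~\eqref{eq:discrete_energy}.
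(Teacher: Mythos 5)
Your proposal is correct and follows essentially the same route as the paper: uniform bounds from the discrete energy estimate, weak compactness with successive extraction, vanishing of the pairwise differences of the time-reconstructions via $k\sum_i\norm{\gamma_h^{i+1}-\gamma_h^i}{\L^2}^2\to0$, and the nodal estimate $\norm{\,|\mmm_{hk}^-|-1\,}{}\lesssim h\max_j\norm{\nabla\mmm_h^j}{\L^2(\omega)}$ for the unit-length constraint. The only difference is cosmetic: you prove the increment bound $|\mmm_h^{j+1}(\zzz)-\mmm_h^j(\zzz)|\le k|\vvv_h^j(\zzz)|$ from the orthogonality of the projection, whereas the paper cites it from \cite{alouges2008, petra}.
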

\begin{proof}
From Lemma~\ref{lem:stability}, we immediately get uniform boundedness of all of those sequences.
A compactness argument thus allows us to succesively extract weakly convergent subsequences. It only remains to show that the corresponding limits coincide, i.e.\
\begin{align*}
\lim \wgamma = \lim \wgamma^- = \lim \wgamma^+ = \lim \overline\gamma_{hk}, \quad \text{ where } \gamma_{hk} \in \{\mmm_{hk}, \HHH_{hk}, \EEE_{hk}\}.
\end{align*}
In particular, Lemma~\ref{lem:stability} provides the uniform bound
\begin{align*}
\sum_{i=0}^{j-1}\norm{\mmm_h^{i+1} - \mmm_h^i}{\L^2(\omega)}^2\le \c{en_dis}.
\end{align*}
Here, we used the fact that $\norm{\mmm_h^{j+1} - \mmm_h^j}{\L^2(\omega)}^2 \le k^2 \norm{\vvv_h^j}{\L^2(\omega)}^2$, see e.g.~\cite{alouges2008} or~\cite[Lemma 3.3.2]{petra}.
We rewrite $\wgamma \in \{\mmm_{hk}, \EEE_{hk}, \HHH_{hk}\}$ as $\gamma_h^j +\frac{t-t_j}{k} (\gamma_h^{j+1} - \gamma_h^j)$ on $[t_{j-1},t_j]$ and thus get
\begin{align*}
\norm{\wgamma - \wgamma^-}{\L^2(\Omega_T)}^2 &=\sum_{j=0}^{N-1} \int_{t_j}^{t_{j+1}}\norm{\gamma_h^j + \frac{t-t_j}{k}(\gamma_h^{j+1} - \gamma_h^j) - \gamma_h^j}{\L^2(\Omega)}^2\\
&\le k\sum_{j=0}^{N-1}\norm{\gamma_h^{j+1}-\gamma_h^j}{\L^2(\Omega)}^2 \longrightarrow 0
\end{align*}
and analogously
\begin{align*}
\norm{\wgamma - \wgamma^+}{\L^2(\Omega_T)}^2 &=\sum_{j=0}^{N-1}\int_{t_j}^{t_{j+1}}\norm{\gamma_h^j + \frac{t-t_j}{k}(\gamma_h^{j+1} - \gamma_h^j) - \gamma_h^{j+1}}{\L^2(\Omega)}^2\\
&\le \sum_{j=0}^{N-1}\int_{t_j}^{t_{j+1}}2\norm{\gamma_h^{j+1} - \gamma_h^j}{\L^2(\Omega)}^2\\
&\le2k\sum_{j=0}^{N-1}\norm{\gamma_h^{j+1} - \gamma_h^j}{\L^2(\Omega)}^2 \longrightarrow 0,
\end{align*}
i.e.\ we have $\lim \wgamma^\pm = \lim \wgamma \in \L^2(\Omega_T)$ resp.\ $\L^2(\omega_T)$. In particular it holds that $\lim \overline \gamma_{hk} = \lim \wgamma$.
From the uniqueness of weak limits and the continuous inclusions $\H^1(\omega_T) \subseteq L^2(\H^1(\omega)) \subseteq \L^2(\omega_T)$, we then even conclude the convergence properties of $\mmm_{hk}, \mmm_{hk}^\pm$, and $\overline\mmm_{hk}$ in $\L^2(\H^1(\omega))$ as well as $\mmm_{hk} \rightharpoonup \mmm$ in $\H^1(\omega_T)$. 
From 
\begin{align*}
\norm{|\mmm| - 1}{\L^2(\omega_T)} \le \norm{|\mmm| - |\mmm_{hk}^-|}{\L^2(\omega_T)} + \norm{|\mmm_{hk}^-| - 1}{\L^2(\omega_T)}
\end{align*}
and 
\begin{align*}
\norm{|\mmm_{hk}^-(t,\cdot)| - 1}{\L^2(\omega)} \le h \max_{t_j} \norm{\nabla \mmm_h^j}{\L^2(\omega)},
\end{align*}
we finally deduce $|\mmm| = 1$ a.e.\ in $\omega_T$.
\end{proof}


\begin{lemma}\label{lem:v=dtm}
The limit function $\vvv \in \L^2(\omega_T)$ equals the time derivative of $\mmm$, i.e.\ $\vvv = \partial_t \mmm$ almost everywhere in $\omega_T$
\end{lemma}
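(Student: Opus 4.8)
The plan is to identify $\vvv$ with $\partial_t\mmm$ by comparing the piecewise-constant velocity $\vvv_{hk}^-$ with the (piecewise-constant in time) discrete time derivative $\partial_t\mmm_{hk}$, which on each interval $(t_j,t_{j+1})$ coincides with $d_t\mmm_h^{j+1}=(\mmm_h^{j+1}-\mmm_h^j)/k$. Since $\mmm_{hk}\rightharpoonup\mmm$ in $\H^1(\omega_T)$, we already know $\partial_t\mmm_{hk}\rightharpoonup\partial_t\mmm$ in $\L^2(\omega_T)$, while Lemma~\ref{lem:subsequences} gives $\vvv_{hk}^-\rightharpoonup\vvv$ in $\L^2(\omega_T)$. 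Hence it suffices to show that these two sequences share the \emph{same} weak limit, and for this I would establish the strong convergence
\[
\norm{\vvv_{hk}^- - \partial_t\mmm_{hk}}{\L^1(\omega_T)} \longrightarrow 0 .
\]
Testing against an arbitrary $\pphi\in C^\infty(\omega_T)\subset\L^\infty(\omega_T)$ then yields $\int_{\omega_T}(\vvv-\partial_t\mmm)\cdot\pphi = 0$, and density of smooth functions in $\L^2(\omega_T)$ forces $\vvv=\partial_t\mmm$ almost everywhere.

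The heart of the matter is a nodewise estimate. Fix a node $\zzz\in\NN_h$. Since $|\mmm_h^j(\zzz)|=1$ and $\vvv_h^j\in\KK_{\mmm_h^j}$ supplies the orthogonality $\vvv_h^j(\zzz)\cdot\mmm_h^j(\zzz)=0$, Pythagoras gives $|\mmm_h^j(\zzz)+k\vvv_h^j(\zzz)| = (1+k^2|\vvv_h^j(\zzz)|^2)^{1/2}$. Inserting the projection step (iii) of Algorithm~\ref{alg:2} then yields
\[
\mmm_h^{j+1}(\zzz)-\mmm_h^j(\zzz)-k\vvv_h^j(\zzz) = \big(\mmm_h^j(\zzz)+k\vvv_h^j(\zzz)\big)\Big(\tfrac{1}{(1+k^2|\vvv_h^j(\zzz)|^2)^{1/2}}-1\Big),
\]
whose modulus equals $(1+k^2|\vvv_h^j(\zzz)|^2)^{1/2}-1\le\tfrac12 k^2|\vvv_h^j(\zzz)|^2$. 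Dividing by $k$, I obtain the pointwise bound $|d_t\mmm_h^{j+1}(\zzz)-\vvv_h^j(\zzz)|\le\tfrac12 k|\vvv_h^j(\zzz)|^2$. Passing from nodes to element integrals via the standard nodal-quadrature (mass-lumping) equivalences — which require only shape regularity, so that the smaller weights cancel between the $\L^1$- and $\L^2$-sums — this gives $\norm{d_t\mmm_h^{j+1}-\vvv_h^j}{\L^1(\omega)}\lesssim k\,\norm{\vvv_h^j}{\L^2(\omega)}^2$.

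Summing over the timesteps and invoking the stability bound from Lemma~\ref{lem:stability} closes the estimate:
\[
\norm{\vvv_{hk}^- - \partial_t\mmm_{hk}}{\L^1(\omega_T)} = k\sum_{j=0}^{N-1}\norm{\vvv_h^j-d_t\mmm_h^{j+1}}{\L^1(\omega)} \lesssim k\sum_{j=0}^{N-1}k\,\norm{\vvv_h^j}{\L^2(\omega)}^2 \lesssim k\,\c{en_dis}\longrightarrow 0 .
\]
I expect the only genuinely delicate point to be the final reconciliation of the two convergence modes: the comparison is merely an $\L^1$-statement — the error $\tfrac12 k|\vvv_h^j|^2$ cannot be controlled in $\L^2$ without an inverse estimate that would couple $h$ and $k$ — whereas $\vvv$ and $\partial_t\mmm$ are only \emph{weak} $\L^2$-limits. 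Pairing the $\L^1$-strong difference against bounded smooth test functions and then using density is exactly what bridges this gap, so one must ensure that the test functions lie in $\L^\infty\cap\L^2$ and that both sequences are evaluated along the common subsequence extracted in Lemma~\ref{lem:subsequences}.
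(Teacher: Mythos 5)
Your proposal is correct and follows essentially the same route as the paper: the central estimate $\norm{\partial_t \mmm_{hk} - \vvv_{hk}^-}{\L^1(\omega_T)} \lesssim k \norm{\vvv_{hk}^-}{\L^2(\omega_T)}^2$ is exactly the inequality the paper quotes (from Alouges and the thesis it cites), and you merely fill in the nodewise Pythagoras/projection computation and the mass-lumping norm equivalence that the paper leaves to the references. Your concluding duality argument against smooth test functions is just a careful spelling-out of the paper's appeal to weak lower semicontinuity of the $\L^1$-norm along the weakly $\L^2$-convergent difference, so the two proofs coincide in substance.
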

\begin{proof}
The proof follows the lines of~\cite{alouges2008} and we therefore only sketch it. The elaborated arguments can be found in~\cite[Lemma 3.3.12]{petra}. Using the inequality
\begin{align*}
\norm{\partial_t \mmm_{hk} - \vvv_{hk}^-}{\L^1(\omega_T)} \lesssim \frac12 k \norm{\vvv_{hk}^-}{\L^2(\omega_T)}^2,
\end{align*}
we exploit weak semicontinuity of the norm to see
\begin{align*}
\norm{\partial_t \mmm - \vvv}{\L^1(\omega_T)}\le \liminf \norm{\partial_t \mmm_{hk} - \vvv_{hk}^-}{\L^1(\omega_T)} = 0 \quad \text{ as } (h,k) \longrightarrow (0,0),
\end{align*}
whence $\vvv = \partial_t \mmm$ almost everywhere in $\omega_T$.
\end{proof}

\begin{proof}[Proof of Theorem~\ref{thm:convergence}]
For the LLG part of~\eqref{eq:weak_sol1}, we follow the lines of~\cite{alouges2008}. Let $\vphi \in C^\infty(\omega_T)$ and $(\ppsi, \zzeta) \in C_c^\infty\big([0,T);C^\infty(\overline\Omega)\cap \HHH_0(\curl, \Omega)\big),$ be arbitrary. We now define test functions by $(\pphi_h, \ppsi_h, \zzeta_h)(t, \cdot) := \big(\II_h(\mmm_{hk}^-\times \vphi), \II_{\XX_h}\ppsi, \II_{\YY_h}\zzeta\big)(t,\cdot)$. Recall that the $\L^2$-orthogonal projection $\II_{\YY_h}:\L^2(\Omega)\rightarrow \YY_h$ satisfies $(\uuu - \II_{\YY_h}\uuu, \mathbf{y}_h) = 0$ for all $\mathbf{y}_h \in \YY_h$ and all $\uuu \in \L^2(\Omega)$. With the notation~\eqref{eq:discrete_functions}, Equation~\eqref{eq:mllg_alg2_1} of Algorithm~\ref{alg:2} implies
\begin{align*}
\alpha \int_0^T (\vvv_{hk}^-, \pphi_h) + \int_0^T\big((\mmm_{hk}^- \times \vvv_{hk}^-), \pphi_h\big) &= -C_e\int_0^T\big(\nabla(\mmm_{hk}^- + \theta k \vvv_{hk}^-), \nabla \pphi_h)\big)\\
&\qquad \qquad  + \int_0^T (\HHH_{hk}^-, \pphi_h) + \int_0^T\big(\pi(\mmm_{hk}^-), \pphi_h\big)
\end{align*}
With $\pphi_h (t, \cdot) := \II_h(\mmm_{hk}^- \times \vphi)(t, \cdot)$ and the approximation properties of the nodal interpolation operator, this yields

\begin{align*}
\int_0^T &\big((\alpha \vvv_{hk}^- + \mmm_{hk}^- \times\vvv_{hk}^-),(\mmm_{hk}^- \times \vphi)\big) + k\,\theta\int_0^T \big(\nabla \vvv_{hk}^-, \nabla(\mmm_{hk}^- \times \vphi)\big)\\
&\quad+ C_e\int_0^T\big(\nabla \mmm_{hk}^-, \nabla(\mmm_{hk}^- \times \vphi)\big) -\int_0^T \big(\HHH_{hk}^-, (\mmm_{hk}^-\times \vphi)\big)- \int_0^T \big(\pi(\mmm_{hk}^-), (\mmm_{hk}^-\times \vphi)\big)\\
&=\mathcal{O}(h)
\end{align*}

Passing to the limit and using the strong $\L^2(\omega_T)$-convergence of $(\mmm_{hk}^-\times \vphi)$ towards $(\mmm \times \vphi)$, we get
\begin{align*}
\int_0^T \big((\alpha \vvv_{hk}^- + \mmm_{hk}^- \times\vvv_{hk}^-),(\mmm_{hk}^- \times \vphi)\big) &\longrightarrow \int_0^T \big((\alpha \mmm_t + \mmm \times\mmm_t),(\mmm \times \vphi)\big),\\
k\,\theta\int_0^T \big(\nabla \vvv_{hk}^-, \nabla(\mmm_{hk}^- \times \vphi)\big) &\longrightarrow 0, \quad \text{ and }\\\int_0^T\big(\nabla \mmm_{hk}^-, \nabla(\mmm_{hk}^- \times \vphi)\big) &\longrightarrow \int_0^T\big(\nabla \mmm, \nabla(\mmm \times \vphi)\big),
\end{align*}
cf.\ \cite{alouges2008}. For the second limit, we have used the boundedness of $k\norm{\nabla \vvv_{hk}^-}{\L^2(\omega_T)}^2$ for $\theta \in (1/2,1]$, see Lemma~\ref{lem:stability}. The weak convergence properties of $\HHH_{hk}^-$ and $\pi(\mmm_{hk}^-)$ from~\eqref{eq:assum3} now yield
\begin{align*}
\int_0^T \big(\HHH_{hk}^-, (\mmm_{hk}^-\times \vphi)\big) &\longrightarrow \int_0^T \big(\HHH, (\mmm\times \vphi)\big) \quad \text{ and }\\
\int_0^T \big(\pi(\mmm_{hk}^-), (\mmm_{hk}^-\times \vphi)\big) &\longrightarrow \int_0^T \big(\pi(\mmm), (\mmm\times \vphi)\big).
\end{align*}

So far, we thus have proved
\begin{align*}
 \int_0^T \big((\alpha \mmm_t + \mmm \times\mmm_t),(\mmm \times \vphi)\big) & = -C_e\int_0^T\big(\nabla \mmm, \nabla(\mmm \times \vphi)\big)\\
 &+ \int_0^T \big(\HHH, (\mmm\times \vphi)\big) + \int_0^T \big(\pi(\mmm), (\mmm\times \vphi)\big)
\end{align*}
Finally, we use the technical results
\begin{align*}
(\mmm \times \mmm_t) \cdot (\mmm \times \vphi) &= \mmm_t \cdot \vphi,\\
\mmm_t \cdot (\mmm \times \vphi) &= -(\mmm \times \mmm_t) \cdot \vphi, \text{ and }\\
\nabla \mmm \times \nabla(\mmm \times \vphi) &= \nabla \mmm \cdot (\mmm \times \nabla \vphi)
\end{align*} 
for the left-hand side, resp.\ the first term on the right-hand side to conclude~\eqref{eq:weak_sol1}. The equality $\mmm(0, \cdot) = \mmm^0$ in the trace sence follows from the weak convergence $\mmm_{hk} \rightharpoonup \mmm$ in $\H^1(\omega_T)$ and thus weak convergence of the traces. Using the weak convergence $\mmm_h^0 \rightharpoonup \mmm^0$ in $\L^2(\omega)$, we finally identify the sought limit. 
For the Maxwell part~\eqref{eq:weak_sol2}--\eqref{eq:weak_sol3} of Definition~\ref{def:weak_sol}, 
we proceed as in~\cite{banas}. Given the above definition of the testfunctions,~\eqref{eq:mllg_alg2_2} implies
\begin{align*}
&\eps_0 \int_0^T \big((\EEE_{hk})_t, \ppsi_h\big) - \int_0^T(\HHH_{hk}^+, \nabla \times \ppsi_h) + \sigma\int_0^T (\chi_\omega \EEE_{hk}^+, \ppsi_h) = \int_0^T (\JJJ_{hk}^-, \ppsi_h)\\
& \mu_0 \int_0^T \big((\HHH_{hk})_t, \zzeta_h\big) + \int_0^T(\nabla \times \EEE_{hk}^+, \zzeta_h) = -\mu_0 \int_0^T (\vvv_{hk}^-, \zzeta_h).
\end{align*}
We now consider each of those two terms separately. For the first term of the first equation, we integrate by parts in time and get
\begin{align*}
\int_0^T \big((\EEE_{hk})_t, \ppsi_h\big) = -\int_0^T\big(\EEE_{hk}, (\ppsi_h)_t\big) +\underbrace{\big(\EEE_{hk}(T,\cdot), \ppsi_h(T, \cdot)\big)}_{=0} - \big(\EEE_h^0, \ppsi_h(0,\cdot)\big)
\end{align*}
Passing to the limit on the right-hand side, we see
\begin{align}\label{eq:time_der_tmp}
\int_0^T \big((\EEE_{hk})_t, \ppsi_h\big) \longrightarrow -\int_0^T\big(\EEE, \ppsi_t\big) - \big(\EEE^0, \ppsi(0,\cdot)\big),
\end{align}
where we have used the assumed convergence of the initial data. 
For the first term in the second equation we proceed analogously. 
The convergence of the terms
\begin{align*}
\int_0^T(\HHH_{hk}^+, \nabla \times \ppsi_h) &\longrightarrow \int_0^T(\HHH, \nabla \times \ppsi), \\
\int_0^T (\chi_\omega \EEE_{hk}^+, \ppsi_h) & \longrightarrow \int_0^T (\chi_\omega \EEE, \ppsi), \\
\int_0^T (\JJJ_{hk}^-, \ppsi_h) &\longrightarrow \int_0^T (\JJJ, \ppsi), \quad \text{and} \\
\int_0^T (\vvv_{hk}^-, \zzeta_h) &\longrightarrow \int_0^T (\mmm_t, \zzeta)
\end{align*}
is straightforward. Here, we have used the approximation properties~\eqref{eq:interpol1}--\eqref{eq:interpol2} of the interpolation operators for the last two limits. It remains to analyze the second term in the second equation. Using $\nabla \times \EEE_{hk}^+(t) \in \YY_h$ and the orthogonality properties of $\II_{\YY_h}$, we deduce

\begin{align*}
 \int_0^T&(\nabla \times \EEE_{hk}^+, \zzeta_h) =  \int_0^T(\nabla \times \EEE_{hk}^+, \zzeta)  -   \int_0^T\big(\nabla \times \EEE_{hk}^+,(1-\II_{\YY_h}) \zzeta\big)\\
 &=  \int_0^T(\nabla \times \EEE_{hk}^+, \zzeta) =  \int_0^T(\EEE_{hk}^+, \nabla \times \zzeta) \longrightarrow  \int_0^T(\EEE, \nabla \times \zzeta).
\end{align*}

For the last equality, we have used the boundary condition $\zzeta \times \nnn = 0$ on $\partial \Omega_T$ and integration by parts.
This yields~\eqref{eq:weak_sol2} and~\eqref{eq:weak_sol3}.

It remains to show the energy estimate~\eqref{eq:energy}. From the discrete energy estimate~\eqref{eq:discrete_energy}, we get for any $t' \in [0,T]$ with $t' \in [t_j, t_{j+1})$
\begin{align*}
&\norm{\nabla \mmm_{hk}^+(t')}{\L^2(\omega)}^2 + \norm{\vvv_{hk}^-}{\L^2(\omega_{t'})}^2 + \norm{\HHH_{hk}^+(t')}{\L^2(\Omega)}^2
+ \norm{\EEE_{hk}^+(t')}{\L^2(\Omega)}^2\\
&=\norm{\nabla \mmm_{hk}^+(t')}{\L^2(\omega)}^2 + \int_0^{t'} \norm{\vvv_{hk}^-(s)}{\L^2(\omega)}^2 + \norm{\HHH_{hk}^+(t')}{\L^2(\Omega)}^2 
+ \norm{\EEE_{hk}^+(t')}{\L^2(\Omega)}^2\\
&\le \norm{\nabla \mmm_{hk}^+(t')}{\L^2(\omega)}^2 + \int_0^{t_{j+1}} \norm{\vvv_{hk}^-(s)}{\L^2(\omega)}^2 + \norm{\HHH_{hk}^+(t')}{\L^2(\Omega)}^2  
+ \norm{\EEE_{hk}^+(t')}{\L^2(\Omega)}^2\\ 
&\le \c{en_dis}
\end{align*}
Integration in time thus yields for any measurable set $\mathfrak{I} \subseteq [0,T]$
\begin{align*}
&\int_\mathfrak{I}\norm{\nabla \mmm_{hk}^+(t')}{\L^2(\omega)}^2 + \int_\mathfrak{I}\norm{\vvv_{hk}^-}{\L^2(\omega_{t'})}^2 + \int_\mathfrak{I}\norm{\HHH_{hk}^+(t')}{\L^2(\Omega)}^2
 + \int_\mathfrak{I}\norm{\EEE_{hk}^+(t')}{\L^2(\Omega)}^2
 \le \int_\mathfrak{I}\c{en_dis}
\end{align*}
whence weak lower semi-continuity leads to
\begin{align*}
&\int_\mathfrak{I}\norm{\nabla \mmm}{\L^2(\omega)}^2 + \int_\mathfrak{I}\norm{\mmm_t}{\L^2(\omega_{t'})}^2 + \int_\mathfrak{I}\norm{\HHH}{\L^2(\Omega)}^2 
\int_\mathfrak{I}\norm{\EEE}{\L^2(\Omega)}^2 
\le \int_\mathfrak{I}\c{en_dis}.
\end{align*}
The desired result now follows from standard measure theory, see e.g.~\cite[IV, Thm.\ 4.4]{elstrodt}.
\end{proof}

\subsection{Analysis of Algorithm~\ref{alg:1}}\label{sec:alg1}
This section deals with Algorithm~\ref{alg:1} and the analysis follows the lines of Section~\ref{sec:alg2}. 
As before, we first need boundedness of the involved discrete quantities. 

\begin{lemma}\label{lem:stability:alg1}
The discrete quantities $(\mmm_h^j, \EEE_h^{j}, \HHH_h^{j}) \in \MM_h \times \XX_h \times \YY_h$ fulfill 
\begin{equation}\label{eq:discrete_energy_alg1}
\begin{split}
\norm{\nabla \mmm_h^j}{\L^2(\omega)}^2 + k\sum_{i=0}^{j-1}\norm{\vvv_h^i}{\L^2(\omega)}^2 + \norm{\HHH_h^j}{\L^2(\Omega)}^2 + \norm{\EEE_h^j}{\L^2(\Omega)}^2  + \big(\theta - 1/2\big)k^2 \sum_{i=0}^{j-1}\norm{\nabla \vvv_h^i}{\L^2(\omega)}^2 \le \c{en_dis_alg1}
\end{split}
\end{equation}
for each $j = 0, \hdots, N$ and some constant $\setc{en_dis_alg1}>0$ that depends only on $|\Omega|$, $|\omega|$, and $C_\pi$.
\end{lemma}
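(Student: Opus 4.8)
The statement for Algorithm~\ref{alg:1} is the exact analogue of Lemma~\ref{lem:stability} for Algorithm~\ref{alg:2}, so the plan is to mirror that proof, adapting it to the two structural differences between the algorithms: (a) in Algorithm~\ref{alg:1} the LLG and Maxwell parts are coupled, with $\HHH_h^{j+1/2}$ appearing in the LLG equation~\eqref{eq:mllg_alg1_1} and $\vvv_h^j$ feeding into the Maxwell equations, and (b) the Maxwell system uses the genuine midpoint values $\EEE_h^{j+1/2}$, $\HHH_h^{j+1/2}$, $\JJJ^{j+1/2}$ rather than the one-sided values of Algorithm~\ref{alg:2}. I expect these differences to actually simplify the argument relative to Lemma~\ref{lem:stability}.

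\textbf{Key steps.} First I would test the two Maxwell equations~\eqref{eq:mllg_alg1_2}--\eqref{eq:mllg_alg1_3} with $(\ppsi_h,\zzeta_h)=(\EEE_h^{i+1/2},\HHH_h^{i+1/2})$. Because of the midpoint structure, the coupling terms $\pm(\HHH_h^{i+1/2},\nabla\times\EEE_h^{i+1/2})$ cancel exactly when the two equations are added, and $(d_t\EEE_h^{i+1},\EEE_h^{i+1/2})=\tfrac{1}{2k}(\norm{\EEE_h^{i+1}}{\L^2(\Omega)}^2-\norm{\EEE_h^i}{\L^2(\Omega)}^2)$ telescopes cleanly, so no Abel summation is needed here. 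Next I would test LLG~\eqref{eq:mllg_alg1_1} with $\vvv_h^i\in\KK_{\mmm_h^i}$, killing the cross term $((\mmm_h^i\times\vvv_h^i),\vvv_h^i)=0$, and reuse the mesh-condition inequality $\norm{\nabla\mmm_h^{i+1}}{\L^2(\omega)}^2\le\norm{\nabla(\mmm_h^i+k\vvv_h^i)}{\L^2(\omega)}^2$ exactly as in~\eqref{eq:nabla_m_bounded} to produce the $(\theta-1/2)k^2\norm{\nabla\vvv_h^i}{\L^2(\omega)}^2$ and $\norm{\vvv_h^i}{\L^2(\omega)}^2$ terms. I would then scale the LLG identity by $\mu_0/k$, add the summed Maxwell identity, and sum over $i=0,\dots,j-1$.

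\textbf{The crucial cancellation.} The payoff of the midpoint coupling is that the term $(\HHH_h^{i+1/2},\vvv_h^i)$ coming from the LLG equation and the term $-\mu_0(\vvv_h^i,\HHH_h^{i+1/2})$ coming from the Maxwell equation~\eqref{eq:mllg_alg1_3} cancel against each other after the $\mu_0/k$ scaling, so the $\HHH$--$\vvv$ interaction drops out entirely rather than surviving as the $(\HHH_h^i-\HHH_h^{i+1},\vvv_h^i)$ term that had to be absorbed via Young's inequality in Lemma~\ref{lem:stability}. What remains on the right-hand side is then only the data term $(\JJJ^{i+1/2},\EEE_h^{i+1/2})$ and the anisotropy term $(\pi(\mmm_h^i),\vvv_h^i)$. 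The former is controlled by Young's and Hölder's inequalities against $\norm{\EEE_h^{i+1/2}}{\L^2(\Omega)}^2$ and $\norm{\JJJ^{i+1/2}}{\L^2(\Omega)}^2$ (the latter bounded via~\eqref{eq:assum6} / the assumed $\JJJ$-regularity), and the latter by $\tfrac{1}{4\eps}\norm{\pi(\mmm_h^i)}{\L^2(\omega)}^2+\eps\norm{\vvv_h^i}{\L^2(\omega)}^2$ using~\eqref{eq:assum2}.

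\textbf{Closing the estimate.} The one genuinely delicate point, inherited from Lemma~\ref{lem:stability}, is that the conductivity term lives only on $\omega$ while the absorbed $\EEE$-contribution lives on $\Omega$; so I expect to set up a discrete Gronwall inequality $a_j\le b+C k\sum_{i=0}^{j-1}a_i$ in the cumulative energy $a_j$ (the left-hand side of~\eqref{eq:discrete_energy_alg1}) and invoke Lemma~\ref{lem:dis_gronwall} rather than trying to absorb the full $\EEE$-term directly. Since~\eqref{eq:discrete_energy_alg1} omits the $\HHH$- and $\EEE$-difference sums present in~\eqref{eq:discrete_energy}, I would simply discard those nonnegative telescoped remainder terms after establishing positivity of the coefficients $C_\vvv,C_\EEE$ (choosing $\eps<\alpha$ and $\nu$ large), which is where the restriction $k<k_0$ enters. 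The main obstacle is thus bookkeeping the midpoint cancellation correctly; once that is done, the Gronwall step is identical to Algorithm~\ref{alg:2}.
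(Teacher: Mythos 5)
Your proposal is correct and follows essentially the same route as the paper: testing the Maxwell equations with the midpoint values so that the curl terms cancel and the $d_t$ terms telescope exactly, testing LLG with $\vvv_h^i$ and invoking the projection inequality from~\eqref{eq:assum1}, exploiting the exact cancellation of the $(\HHH_h^{i+1/2},\vvv_h^i)$ coupling terms after the $\mu_0/C_e$ scaling, and closing via the $\omega$-versus-$\Omega$ enlargement of the $\EEE$-term plus the discrete Gronwall lemma. The only cosmetic discrepancy is your remark about discarding ``telescoped remainder terms'': as you yourself note earlier, the midpoint testing produces no such difference terms here (which is precisely why the bound~\eqref{eq:discrete_energy_alg1} lacks the difference sums of~\eqref{eq:discrete_energy}), so there is nothing to discard.
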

Note, that in contrast to Lemma~\ref{lem:stability} from the analysis of Algorithm~\ref{alg:2}, we do not have boundedness of $\sum_{i=0}^{j-1}(\norm{\HHH_h^{i+1}-\HHH_h^i}{\L^2(\Omega)}^2 + \norm{\EEE_h^{i+1}-\EEE_h^i}{\L^2(\Omega)}^2)$ in this case.
\begin{proof}
As before, the proof relies on the choice of the correct test functions. For Maxwell's equations~\eqref{eq:mllg_alg1_2}--\eqref{eq:mllg_alg1_3}, we choose $(\ppsi_h, \zzeta_h) = (\EEE_h^{i+1/2}, \HHH_h^{i+1/2})$ and obtain after summing up
\begin{align*}
d_t\big(\frac{\eps_0}{2}\norm{\EEE_h^{i+1}}{\L^2(\Omega)}^2+ \frac{\mu_0}{2}\norm{\HHH_h^{i+1}}{\L^2(\Omega)}^2 \big) &+ \sigma\norm{\chi_\omega \EEE_h^{i+1/2}}{\L^2(\Omega)}^2 \\
&\quad= -(\JJJ^{i+1/2}, \EEE_h^{i+1/2}) - \mu_0(\vvv_h^i, \HHH_h^{i+1/2}).
\end{align*}
For the LLG equation~\eqref{eq:mllg_alg1_1}, we again test with $\vphi_h = \vvv_h^i \in \KK_{\mmm_h^i}$ and argue as in~\eqref{eq:nabla_m_bounded} to see
\begin{align*}
\frac{\mu_0}{2k}\big(\norm{\nabla \mmm_h^{i+1}}{\L^2(\omega)}^2-\norm{\nabla \mmm_h^i}{\L^2(\omega)}^2\big) + \frac{\alpha \mu_0}{C_e}\norm{\vvv_h^i}{\L^2(\omega)}^2 & + \big(\theta-1/2\big)\mu_0 k\norm{\nabla \vvv_h^i}{\L^2(\omega_T)}^2\\
& \le \frac{\mu_0}{C_e}(\HHH_h^{i+1/2}, \vvv_h^i) +\frac{\mu_0}{C_e} \big(\pi(\mmm_h^i), \vvv_h^i\big).
\end{align*}
The combination of the last two estimates thus yields for any $\eps , \nu > 0$
\begin{align*}
d_t\big(\frac{\mu_0}{2} \norm{\nabla \mmm_h^{i+1}}{\L^2(\omega)}^2 &+ \frac{\eps_0}{2C_e}\norm{\EEE_h^{i+1}}{\L^2(\Omega)}^2+\frac{\mu_0}{2C_e}\norm{\HHH_h^{i+1}}{\L^2(\Omega)}^2\big) + \frac{\sigma}{C_e}\norm{\chi_\omega\EEE_h^{i+1/2}}{\L^2(\Omega)}^2\\
&\quad   + \big(\theta-1/2\big)\mu_0 k\norm{\nabla \vvv_h^i}{\L^2(\omega_T)}^2 + \frac{\mu_0}{C_e}(\alpha - \eps)\norm{\vvv_h^i}{\L^2(\omega)}^2\\
&\le \frac{\nu}{C_e}\norm{\JJJ^{i+1/2}}{\L^2(\Omega)}^2 +  \frac{1}{4\nu C_e}\norm{\EEE_h^{i+1/2}}{\L^2(\Omega)}^2 + \frac{\mu_0}{4\eps C_e}\norm{\pi(\mmm_h^i)}{\L^2(\omega)}^2.
\end{align*}
Multiplying by $k$ and summing over the timesteps, we see
\begin{align*}
\frac{\mu_0}{2} \norm{\nabla \mmm_h^{j}}{\L^2(\omega)}^2 &+ \frac{\eps_0}{2C_e}\norm{\EEE_h^{j}}{\L^2(\Omega)}^2+\frac{\mu_0}{2C_e}\norm{\HHH_h^{j}}{\L^2(\Omega)}^2 + \frac{k\sigma}{C_e} \sum_{i=0}^{j-1}\norm{\chi_\omega\EEE_h^{i+1/2}}{\L^2(\Omega)}^2\\
&\quad + \big(\theta-1/2\big)\mu_0 k^2 \sum_{i=0}^{j-1}\norm{\nabla \vvv_h^i}{\L^2(\omega_T)}^2 + \frac{\mu_0 k}{C_e}(\alpha - \eps)\sum_{i=0}^{j-1}\norm{\vvv_h^i}{\L^2(\omega)}^2\\
&\le \frac{k\nu}{C_e} \sum_{i=0}^{j-1}\norm{\JJJ^{i+1/2}}{\L^2(\Omega)}^2 +  \frac{k}{4\nu C_e} \sum_{i=0}^{j-1}\norm{\EEE_h^{i+1/2}}{\L^2(\Omega)}^2 + \frac{k\mu_0}{4\eps C_e} \sum_{i=0}^{j-1}\norm{\pi(\mmm_h^i)}{\L^2(\omega)}^2\\
&\quad + \frac{\mu_0}{2} \norm{\nabla \mmm_h^{0}}{\L^2(\omega)}^2 + \frac{\eps_0}{2C_e}\norm{\EEE_h^{0}}{\L^2(\Omega)}^2+\frac{\mu_0}{2C_e}\norm{\HHH_h^{0}}{\L^2(\Omega)}^2\\
&\le C + \frac{k}{8\nu C_e}\sum_{i=0}^{j-1}\big(\norm{\EEE_h^{i+1}}{\L^2(\Omega)}^2 + \norm{\EEE_h^i}{\L^2(\Omega)}^2\big).
\end{align*}
Analogously to the last section, the term $\sum_{i=0}^{j-1}\norm{\EEE_h^{i+1/2}}{\L^2(\Omega)}^2$ in the third line cannot be absorbed by the one on the left-hand side directly due to the different domains. We thus again extend the quantity by
\begin{align*}
\sum_{i=0}^{j-1}\big(\norm{\EEE_h^{i+1}}{\L^2(\Omega)}^2 + \norm{\EEE_h^i}{\L^2(\Omega)}^2\big) \le \norm{\EEE_h^j}{\L^2(\Omega)}^2 + 2\sum_{i=0}^{j-1}\norm{\EEE_h^i}{\L^2(\Omega)}^2,
\end{align*} 
and absorb the first part by the left-hand side for appropriate $\nu$. As before, the assertion then follows by an application of the discrete Gronwall's inequality.
\end{proof}

Analogously to Lemma~\ref{lem:subsequences}, we conclude the existence of weakly convergent subsequences that fulfill
\begin{subequations}
\begin{align}\label{eq:subsequences_alg1}
&\mmm_{hk} \rightharpoonup \mmm \text{ in } \H^1(\omega_T),\\
&\mmm_{hk}, \mmm_{hk}^\pm, \overline\mmm_{hk} \rightharpoonup \mmm \text{ in } \L^2(\H^1), \quad \\
&\mmm_{hk}, \mmm_{hk}^\pm, \overline\mmm_{hk} \rightarrow \mmm \text{ in } \L^2(\omega_T),\\
&\HHH_{hk}, \HHH_{hk}^{\pm}, \overline \HHH_{hk} \rightharpoonup \HHH \text{ in } \L^2(\Omega_T),\\
&\EEE_{hk}, \EEE_{hk}^\pm, \overline \EEE_{hk} \rightharpoonup \EEE \text{ in } \L^2(\Omega_T),\\
&\vvv_{hk}^- \rightharpoonup \vvv \text{ in } \L^2(\omega_T).
\end{align}
\end{subequations}

Note also that the above mentioned boundedness of 
\begin{align*}
\sum_{i=0}^{j-1}(\norm{\HHH_h^{i+1}-\HHH_h^i}{\L^2(\Omega)}^2 + \norm{\EEE_h^{i+1}-\EEE_h^i}{\L^2(\Omega)}^2)
\end{align*}
is not necessary to prove that the limits of the in time piecewise constant and piecewise affine approximations coincide. The remedy is a clever use of the midpoint rule.
The proof of Theorem~\ref{thm:convergence} for Algorithm~\ref{alg:1} then completely follows the lines of the one for Algorithm~\ref{alg:2}.

\begin{proof}[Proof of the convergence Theorem~\ref{thm:convergence} for Algorithm~\ref{alg:1}]
Again using $(\pphi_h, \ppsi_h, \zzeta_h)(t, \cdot) := \big(\II_h(\mmm_{hk}^-\times \vphi), \II_{\XX_h}\ppsi, \II_{\YY_h}\zzeta\big)(t,\cdot)$ for any $(\vphi, \ppsi, \zzeta) \in C^\infty(\omega_T) \times C_c^\infty\big([0,T);C^\infty(\overline\Omega)\cap \HHH_0(\curl, \Omega)\big)^2$, Algorithm~\ref{alg:1} implies
\begin{equation}\label{eq:proof_eq}
\begin{split}
&\alpha\int_0^T(\vvv_{hk}^-, \pphi_h) + \int_0^T\big((\mmm_{hk}^- \times \vvv_{hk}^-), \pphi_h\big) = -C_e \int_0^T\big(\nabla(\mmm_{hk}^-+ \theta k \vvv_{hk}^-), \nabla \pphi_h\big) \\
&\hspace*{40ex}+ \int_0^T(\overline\HHH_{hk}, \pphi_h) + \int_0^T\big(\pi(\mmm_{hk}^-), \pphi_h\big)\\
&\eps_0\int_0^T\big((\EEE_{hk})_t, \ppsi_h\big) - \int_0^T(\overline\HHH_{hk}, \nabla \times \ppsi_h) + \sigma\int_0^T(\chi_\omega\overline\EEE_{hk}, \ppsi_h) = -\int_0^T(\overline\JJJ_{hk}, \ppsi_h)\\
&\mu_0\int_0^T\big((\HHH_{hk})_t, \zzeta_h\big) + \int_0^T(\nabla \times \overline\EEE_{hk}, \zzeta_h) = -\mu_0\int_0^T(\vvv_{hk}^-, \zzeta_h).
\end{split}
\end{equation}
Next, we use the fact that the above scalar products containing $\overline\EEE_{hk}$ and $\overline\HHH_{hk}$ can be expressed by means of $\EEE_{hk}$ and $\HHH_{hk}$, respectively, by use of piecewise constant test functions in time. For $\Lambda \in C^\infty(\Omega_T)$, consider the piecewise constant approximation $\Lambda^- \in \PP^0(\II_k, C^\infty(\Omega))$ with $\Lambda^-(t) = \Lambda(t_j)$ for $t \in [t_j, t_{j+1})$. Since the midpoint rule is exact for the (piecewise) affine function $(\HHH_{hk}, \Lambda^-)$, there holds
\begin{align*}
\int_0^T (\overline\HHH_{hk}, \Lambda^-) &= \sum_{j=0}^{N-1}\int_{t_j}^{t_{j+1}}\frac12\big(\HHH_h^{j+1} + \HHH_h^j, \Lambda(t_j)\big) = k \sum_{j=0}^{N-1}(\HHH_{hk}, \Lambda^-)\left(\frac{t_{j+1} + t_j}{2}\right)\\&=\sum_{j=0}^{N-1}\int_{t_j}^{t_{j+1}}\big(\HHH_{hk}, \Lambda^-\big)
= \int_0^T (\HHH_{hk}, \Lambda^-).
\end{align*}
Analogously, we get
\begin{align*}
\int_0^T (\overline\HHH_{hk}, \nabla \times \Lambda^-) & = \int_0^T (\HHH_{hk}, \nabla \times \Lambda^-),\\
\int_0^T (\chi_\omega \overline\EEE_{hk}, \Lambda^-) &= \int_0^T (\chi_\omega \EEE_{hk}, \Lambda^-),\\
\int_0^T (\nabla \times \overline\EEE_{hk}, \Lambda^-) &= \int_0^T (\nabla \times \EEE_{hk}, \Lambda^-).
\end{align*}
Now, let $\pphi_h^-, \ppsi_h^-,\zzeta_h^- \in \PP^0(\II_k)$ denote the in time piecewise constant approximations of $\pphi_h, \ppsi_h,$ and $\zzeta_h$ respectively. We then get
\begin{align*}
\int_0^T (\overline\HHH_{hk}, \pphi_h) &= \int_0^T (\overline\HHH_{hk}, \pphi_h^-) + \int_0^T (\overline\HHH_{hk}, \pphi_h - \pphi_h^-)\\
&= \int_0^T (\HHH_{hk}, \pphi_h^-) + \underbrace{\int_0^T (\overline\HHH_{hk}, \pphi_h - \pphi_h^-)}_{=:a_{hk}}
\end{align*}
In the following, we are going to show $\lim_{(h,k)\to (0,0)} a_{hk} = 0$. By definition, we get
\begin{align*}
a_{hk} &= \int_0^T \big(\overline\HHH_{hk}, \II_h(\mmm_{hk}^- \times \vphi - \mmm_{hk}^-\times \vphi^-)\big)\\
&= \int_0^T \big(\overline\HHH_{hk}, (\mmm_{hk}^- \times \vphi - \mmm_{hk}^-\times \vphi^-)\big) + \int_0^T \big(\overline\HHH_{hk}, (1-\II_h)(\mmm_{hk}^- \times \vphi - \mmm_{hk}^-\times \vphi^-)\big).
\end{align*}
For the second term, we immediately get 
\begin{align*}
 \int_0^T \big(\overline\HHH_{hk}, (1-\II_h)(\mmm_{hk}^- \times \vphi - \mmm_{hk}^-\times \vphi^-)\big) = \mathcal{O}(h^2)
\end{align*}
due to boundedness of $\norm{\overline\HHH_{hk}}{\L^2(\Omega)}, \norm{\mmm_{hk}(\cdot)}{\H^1(\Omega)}$ and $\norm{\vphi}{W^{2,\infty}}$, and therefore elementwise boundedness of $\norm{(\mmm_{hk}^- \times \vphi) - (\mmm_{hk}^-\times \vphi^-)}{\H^2(T)}$ for any $TÊ\in \TT_h$. For the first term on the right-hand side, the mean value theorem yields
\begin{align*}
\int_{t_j}^{t_{j+1}}\norm{(\mmm_{hk}^- \times \vphi) - (\mmm_{hk}^-\times \vphi^-)}{\L^2(\Omega)}^2 &\le \norm{\vphi^--\vphi}{\L^\infty([t_j, t_{j+1}];\L^2(\Omega))}^2 \int_{t_j}^{t_{j+1}}\norm{\mmm_{hk}^-}{\L^2(\Omega)}^2\\
&\le k^2\norm{\vphi_t}{L^\infty([t_j,t_{j+1}]\L^2(\Omega))}^2\int_{t_{j}}^{t_{j+1}}\norm{\mmm_{hk}^-}{\L^2(\Omega)}^2 \longrightarrow 0,
\end{align*}
whence strong $\L^2(\Omega_T)$-convergence of $(\mmm_{hk}^- \times \vphi - \mmm_{hk}^-\times \vphi^-)$ to $0$.
Altogether, we thus get
\begin{align*}
\int_0^T \big(\overline\HHH_{hk}, (\mmm_{hk}^- \times \vphi - \mmm_{hk}^-\times \vphi^-)\big) \le \norm{\overline\HHH_{hk}}{\L^2(\Omega_T)}\norm{(\mmm_{hk}^- \times \vphi) - (\mmm_{hk}^-\times \vphi^-)}{\L^2(\Omega_T)} \longrightarrow 0.
\end{align*}
Analogously, we derive $\pphi_h^- \to \mmm \times \vphi$ strongly in $\L^2(\Omega_T)$ and thus conclude
\begin{align*}
\int_0^T (\overline\HHH_{hk}, \pphi_h) \longrightarrow \int_0^T (\HHH, \mmm \times \vphi) \quad \text{ as } (h,k) \to (0,0).
\end{align*}
Similar arguments show
\begin{align*}
\int_0^T (\overline\HHH_{hk}, \nabla \times \ppsi_h) = \int_0^T (\HHH_{hk}, \nabla \times \ppsi_h^-) + \int_0^T (\overline\HHH_{hk},\nabla \times\big( \ppsi_h - \ppsi_h^-)\big) &\longrightarrow \int_0^T (\HHH, \nabla \times \ppsi),\\
\int_0^T (\chi_\omega \overline\EEE_{hk}, \ppsi_h) = \int_0^T (\chi_\omega\EEE_{hk}, \ppsi_h^-) + \int_0^T (\chi_\omega\overline\EEE_{hk}, \ppsi_h - \ppsi_h^-) &\longrightarrow \int_0^T (\chi_\omega \EEE, \ppsi),\\
\int_0^T (\nabla \times \overline\EEE_{hk}, \zzeta_h) = \int_0^T (\nabla \times \EEE_{hk}, \zzeta_h^-) + \int_0^T (\nabla \times \overline\EEE_{hk}, \zzeta_h - \zzeta_h^-) &\longrightarrow \int_0^T (\EEE,\nabla \times \zzeta). 
\end{align*}
Using the convergence properties~\eqref{eq:subsequences_alg1}, the remainder of the proof follows as for the one of Theorem~\ref{thm:convergence} for Algorithm~\ref{alg:2}. As for the energy estimate, we again utilize weak semi-continuity and the discrete energy estimate~\eqref{eq:discrete_energy_alg1}.
\end{proof}
\section{Numerical examples}\label{sec:numerics}

We study the standard $\mu$-mag benchmark problem no. 4, see \cite{mumag} using Algorithm~2 and Algorithm~3. Here, the effective field consists of the magnetic field $\HHH$ from Maxwell's equation and some constant external field $\HHH_{ext}$, i.e.\ $\pi(\mmm_h^j) = \HHH_{ext}$ for all $j = 1, \hdots, N$.
This problem has been solved previously using the midpoint scheme in \cite{banas_mumag}, and we also use those results for comparison.


Despite the fact that the system (\ref{eq:mllg_alg1})
in Algorithm~2 is linear, for computational reasons it is preferable to solve Maxwell's and LLG equations separately. 
After decoupling, the corresponding linear systems can be solved using dedicated linear solvers.
This leads to a considerable improvement in computational performance, cf.\ \cite{banas}.
In order to decouple the respective equations in (\ref{eq:mllg_alg1}), we employ a simple block Gauss-Seidel algorithm.
For simplicity we set $\sigma \equiv 0$, $\JJJ \equiv \mathbf{0}$.
Assuming the solution $\vvv_h^{j-1}$, $\HHH_h^{j}$, $\EEE_h^j$ is known for a fixed time level $j$,
we set $\GGG_h^{0} = \HHH_h^{j}$, $\FFF_h^0 = \EEE_h^j$, and $\www_h^0 = \vvv_h^{j-1}$ and iterate the following problem over ${\ell}$: \emph{Find $\www_h^\ell, \FFF_h^\ell, \GGG_h^\ell \in \KK_{\mmm_h^j}\times \XX_h \times \YY_h$ such that for all $\pphi_h, \ppsi_h, \zzeta_h \in \KK_{\mmm_h^j}\times \XX_h \times \YY_h$, we have}
\begin{subequations}\label{fixiter}
\begin{equation}
\begin{split}
&\alpha(\www_h^{\ell}, \pphi_h) + \big((\mmm_h^j \times \www_h^j), \pphi_h\big) = -C_e \big(\nabla(\mmm_h^j + \theta k \www_h^{\ell}), \nabla \pphi_h\big) \\&\hspace*{35ex}+ (\GGG_h^{{\ell}-1} + \HHH_{ext}, \pphi_h) 
\end{split}
\end{equation}
\begin{align}
&\hspace*{3ex}\eps_0 \frac{2}{k}( \FFF_h^{{\ell}}, \ppsi_h) - (\GGG_h^{{\ell}}, \nabla \times \ppsi_h)  = \eps_0\frac{2}{k}( \EEE_h^{j}, \ppsi_h)\quad \\
&\hspace*{3ex}\mu_0\frac{2}{k}(\GGG_h^{{\ell}}, \zzeta_h) + (\nabla \times \FFF_h^{{\ell}}, \zzeta_h) = \mu_0\frac{2}{k}(\HHH_h^{j}, \zzeta_h)-\mu_0(\www_h^{\ell}, \zzeta_h)
\end{align}
\end{subequations}
until $\|\www_h^{{\ell}}-\www_h^{{\ell}-1}\|_{\infty} + \|\GGG_h^{{\ell}}-\GGG_h^{{\ell}-1}\|_{\infty} + \norm{\FFF_h^\ell - \FFF_h^{\ell-1}}{\infty}< TOL$. In this setting, $\FFF_h^\ell$ is an approximation of $\EEE_h^{j+1/2}$ and $\GGG_h^\ell$ is an approximation of $\HHH_h^{j+1/2}$, respectively. Therefore, we have 
\begin{align*}
\frac{2}{k}(\FFF_h^\ell - \EEE_h^j) \approx \frac{2}{k}(\EEE_h^{j+1/2}-\EEE_h^j) = \frac{\EEE_h^{j+1}-\EEE_h^j}{k} = d_t\EEE_h^{j+1}.
\end{align*}
Analogous treatment of the $\HHH_h^{j+1/2}$-term thus motivates the above algorithm.
We obtain the solution on the time level $j+1$ as $\vvv_h^{j}=\www_h^{\ell}$,
$\HHH_h^{j+1} = 2\GGG_h^{{\ell}}- \HHH_h^{j}$, $\EEE_h^{j+1} = 2\FFF_h^{{\ell}}- \EEE_h^{j}$.
The linear system (\ref{fixiter}a) is solved using a direct solver, where the constraint on the space $\KK_{\mmm_h^j}$ is realized via a Lagrange multiplier, see~ \cite{mathmod2012}. For the solution of the linear system, (\ref{fixiter}b)--(\ref{fixiter}c) we employ a multigrid preconditioned Uzawa algorithm from \cite{banas}. 

The physical parameters for the computation were
$\mu_0 = 1.25667\times 10^{-6}$, $\varepsilon_0 = 0.88422\times10^{-11}$,
$A = 1.3\times 10^{-11}$, $M_s = 8\times 10^{5}$, $\gamma = 2.211\times 10^5$, $\alpha = 0.02$, 
$\HHH_{ext}= (\mu_0M_s)^{-1}(-24.6, 4.3,0)$, $C_e=2A(\mu_0M_s^2)^{-1}$. Here, $\gamma$ denotes the gyromagnetic ratio, and $M_s$ is the so-called saturation magnetization, see e.g.\ \cite{multiscale}. We set $\theta=1$ in both, Algorithm~\ref{alg:1} and~\ref{alg:2}.
The ferromagnetic domain $\omega = 0.5\times 0.125\times 0.003$ $(\mu \mathrm{m})$
is uniformly partitioned into cubes with dimensions of $(3.90625\times 3.90625\times 3) (n\mathrm{m})$,
each cube consisting of six tetrahedra.
The Maxwell's equations are solved on the domain $\Omega = (4\times 4\times 3.072)$ $(\mu \mathrm{m})$.
The finite element mesh for the domain $\Omega$ is constructed by gradual refinement towards the
ferromagnetic domain $\omega$, see Figure~\ref{fig_mesh}.
We take a uniform timestep
$k = 0.05$ which is two times larger than the time-step required for the midpoint scheme \cite{banas_mumag}.
Note that the scheme admits time-steps up to $k=1$, the smaller time-step has been chosen to attain the desired accuracy.
\begin{figure}[htp!]
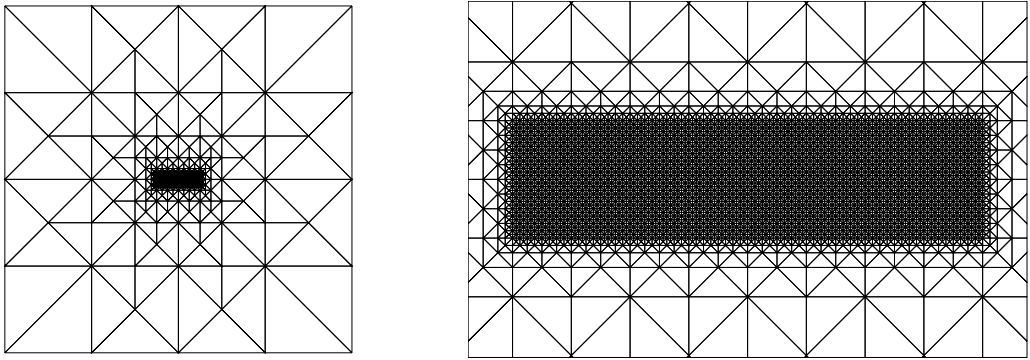

\center
\includegraphics[width=0.45\textwidth]{figures/mesh_big}
\includegraphics[width=0.45\textwidth]{figures/mesh_small}
\caption{Mesh for the domain $\Omega$ at $x_{3} = 0$ (left)
and zoom at the mesh for the domain $\omega$ at $x_3=0$ (right).}
\label{fig_mesh}
\end{figure}

The initial condition $\mmm_0$ for the magnetization is an equilibrium ``S-state'',
see Figure~\ref{fig_sstate}, which is computed from a long-time simulation as in \cite{banas}, \cite{banas_mumag}.
The initial condition $\HHH_0$ is obtained from the magnetostatic approximation
of Maxwell's equation with
and $\EEE_0 = {\bf 0}$, for details see \cite{banas_mumag}.
\begin{figure}[htp!]
\center
\includegraphics[width=0.7\textwidth]{figures/sstate}
\caption{Initial condition $\mmm^{0}$.}
\label{fig_sstate}
\end{figure}
In Figure~\ref{fig_averm} we plot the evolution of the average components $m_1$ and $m_2$ of the magnetization
for Algorithm~2 and Algorithm~3.
For comparison, we also present the results  computed
with the midpoint scheme from \cite{banas_mumag} with timestep $k=0.02$.
\begin{figure}[htp!]
\center
\includegraphics[width=0.7\textwidth]{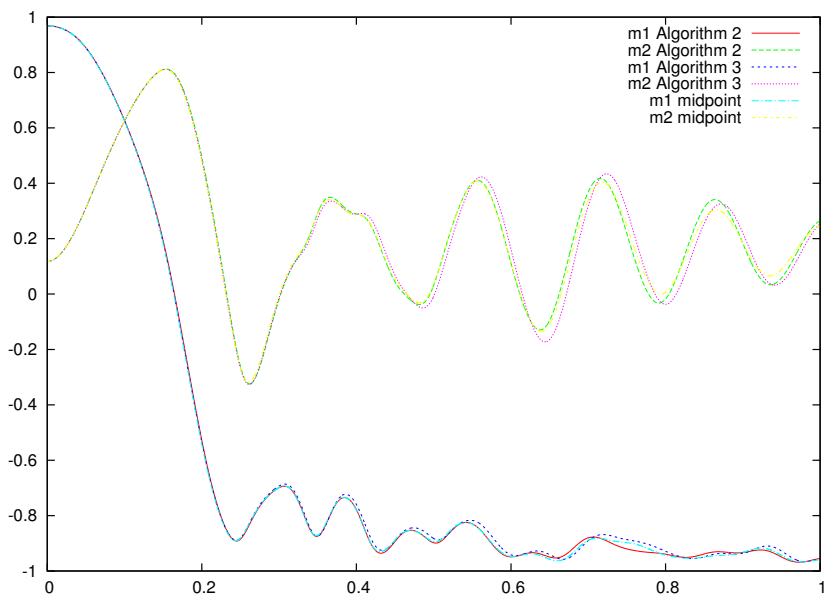}
\caption{Evolution of $|\omega|^{-1}\int_{\omega}m_{1}$ and $|\omega|^{-1}\int_{\omega} m_{2}$, where $m_j$ denotes the $j$-th component of the computed magnetization $\mmm: \omega \to \R^3$.}
\label{fig_averm}
\end{figure}

We also show a snapshot of the magnetization for Algorithm~2 and the midpoint scheme
at times when $|\omega|^{-1}\int_{\omega}m_{1}(t) = 0$ in Figures~\ref{fig_mllg} and \ref{fig_stat},
respectively. We conclude that the results for both algorithms are in good agreement with those computed with the midpoint scheme.
\begin{figure}
\center
\includegraphics[width=0.7\textwidth]{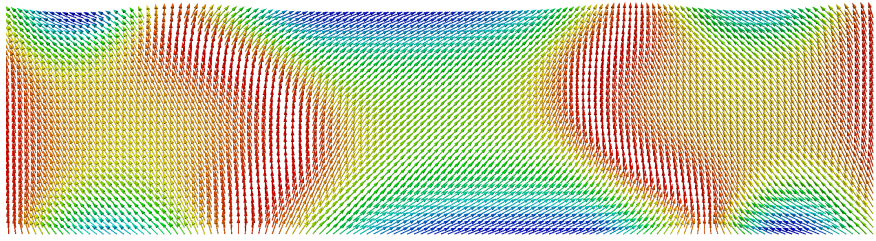}
\caption{Algorithm~\ref{alg:1}: solution at $|\omega|^{-1}\int_{\omega} m_{1}(t) = 0$.}
\label{fig_mllg}
\end{figure}
\vfill
\begin{figure}
\center
\includegraphics[width=0.7\textwidth]{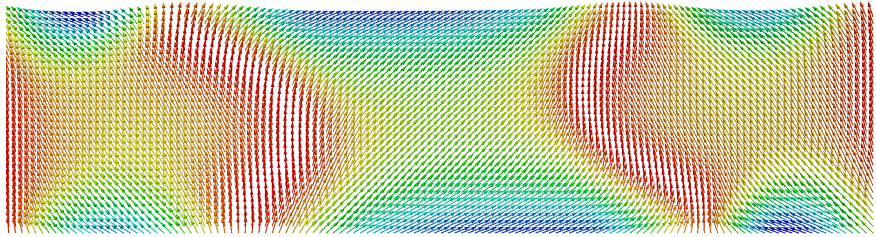}
\caption{Midpoint scheme from~\cite{banas, banas_mumag}: solution at $|\omega|^{-1}\int_{\omega} m_{1} = 0$.}
\label{fig_stat}
\end{figure}


\noindent \textbf{Acknowledgements.}
The authors acknowledge financial support though the WWTF project MA09-029 and the FWF project P21732.

\def\new#1{#1}

\newcommand{\bibentry}[2][!]{\ifthenelse{\equal{#1}{!}}{\bibitem{#2}}{\bibitem[#1]{#2}}}


\end{document}